\newtheorem{thm}{Theorem}[section]
\newtheorem{deft}[thm]{Definition}
\newtheorem{lem}[thm]{Lemma} 
\newtheorem{prop}[thm]{Proposition}
\newtheorem{remark}[thm]{Remark}
\def\Z{\mathbb{Z}} % entiers relatifs
\def\R{\mathbb{R}} % r\'eels
\def\C{\mathbb{C}} % complexes
\newcommand{\Cc}{\textbf{C}}
\renewcommand {\epsilon}{\varepsilon}
\renewcommand {\le}{\leqslant}
\renewcommand {\ge}{\geqslant}
\renewcommand {\geq}{\geqslant}
\renewcommand{\thefigure}{\arabic{figure}}
\renewcommand{\fnum@figure}{{\figurename~\thefigure\ }}
\def\figurename{{Fig.}}%
\title{Jet schemes and minimal toric embedded resolutions of rational double point singularities }\author{Hussein Mourtada, Camille Pl\'enat}
\begin{document}

\maketitle

%%%%%%%%%%%%%%%%%%%%%%%%%%%%%%%%%%%%%%%%%%%%%%%%%%%%%%%%%%%%
%%%  Abstract  %%%
%%%%%%%%%%%%%%%%%%
\begin{abstract}Using the structure of the jet schemes of rational double point singularities, we construct ''minimal embedded toric resolutions'' of these singularities. 
We also establish, for these singularities, a correspondence  between a natural class of irreducible components of the jet schemes centered at the singular locus and 
the set of divisors which appear on every ''minimal embedded toric resolution''. 
 We prove that this correspondence is bijective except for the $E_8$ singulartiy. This can be thought as an embedded Nash correspondence for rational double
 point singularities. 
\end{abstract}

%\address{Universit\'{e} de Provence, LATP UMR 6632\\
%Centre de Math\'{e}matiques et Informatique, 39 rue Joliot-Curie, 13453
  %  Marseille cedex 13, France.} \\
%\email{plenat@cmi.univ-mrs.fr}\\
%\email{spivakov@picard.ups-tlse.fr}\\
%\subjclass{14B05, 32S25, 32S45}
%\keywords{Space of arcs, Nash map,
%Nash problem}

\thispagestyle{empty}

\thispagestyle{empty}

%\pagenumbering{roman} \setcounter{page}{1}

%\tableofcontents

%\pagenumbering{arabic} \setcounter{page}{1}

           %     \input{psfig}

%%%%%%%%%%%%%%%%%%%%%%%%%
%\pagestyle{fancy}

\section{Introduction}

\let\thefootnote\relax\footnote{\noindent \textbf{2010 Mathematics Subject Classification.} 14E15,14E18,14M25.\\
\textbf{Keywords} Embedded Nash problem, Resolution of 
singularities, Toric Geometry.\\
This research was partially supported by the ANR-12-JS01-0002-01 SUSI. The second author is supported by Labex Archim\`ede  (ANR-11-LABX-0033) and  the A*MIDEX project (ANR-11-IDEX-0001-02), 
funded by the ``Investissements d'Avenir" French Government programme managed by the French National Research Agency (ANR)", and by the CLVA.

}

In this article, we construct embedded resolutions  of surfaces having rational double point singularities; also called simple singularities. 
The word simple refers to the fact that they have no moduli ({i.e.,} a hypersurface singularity with the same topological type of a simple singularity
is analytically isomorphic to the simple singularity \cite{LeT}).
But at the same time, they are simple from a resolution of singularities point of view, i.e., easy to resolve. The traditional approach
to resolve singularities  is to iterate blowing ups at smooth centers in order to make an invariant drop. This invariant 
takes values in a discrete ordered set with a smallest element (which detects smoothness). It should not only detect smoothness,  but also be easy to  
compute so that its behavior can be followed when iterating the blowing ups. In this article, we adopt a different strategy to resolve a simple singularity $X\subset \Cc^3:$
we construct an embedded toric resolution of $X\subset \Cc^3.$ This construction is based on a deep invariant, the set \textbf{EC} (essential components) of irreducible components of the jet schemes satisfying some natural properties. 
Recall that for $m\geq 0,$ the $m-$th jet scheme of a variety $X$ parametrizes morphisms $\mbox{Spec}~\Cc[t]/(t^{m+1})\longrightarrow X$ (see section 2 for details).  
These are finite dimensional approximations of the space of arcs which parametrize germs of curves drawn on $X.$ Actually the geometry of the $m-$th jet scheme is 
intimately related with the geometry of the set of arcs in a smooth ambient space containing $X$ and which have ''contact'' with $X$ larger than $m.$ This explains 
in parts why while the arc space of $X$ detect information about abstract resolution of singularities {(see \cite{N})}, jet schemes detect information about embedded resolution of singularities {(see \cite{Mo5}, \cite{LMR}, \cite{ELM})}.

% We show on an example (the $D_5$ singularity), that the number of irreducible components of the exceptional divisor of the ``canonical'' minimal resolution that we obtain is less than the number  of irreducible components of the exceptional divisor of Oka's canonical resolution of this singularity.

The other subject which this article considers is the minimality of embedded resolutions of singularities. 
In contrast with the abstract resolution case, there is no ''universal'' minimal embedded resolution for surface singularities. Therefore we need  to
make this notion more precise. Actually, the embedded resolution of 
$X\subset \Cc^3$ that we construct is toric and is obtained from a particular regular subdivision of the Newton dual fan $\Gamma$ associated with $X$ (see section 3 for the definition of $\Gamma$). Since simple singularities are Newton non-degenerate (see section 3), regular subdivisions of $\Gamma$ 
give toric embedded resolutions of $X\subset \Cc^3$ (see \cite{Ho},\cite{Va},\cite{LJ}). This is equivalent to say that an abstract resolution of singularities of the toric variety $Z_\Gamma$
defined by the fan $\Gamma$ gives an embedded toric resolution of $X\subset \Cc^3.$ But the toric variety $Z_\Gamma$ is of dimension $3$ and
hence thanks to a theorem by Bouvier and Gonzalez-Sprinberg \cite{BGS}, we know that there exists a toric resolution of singularities of $Z_\Gamma$ where all the irreducible 
divisors of the exceptional locus are essential, i.e., the centers of the divisorial valuations associated with these divisors give irreducible components 
of the exceptional locus of any other toric resolution of singularities of $Z_\Gamma.$ We call such a resolution of singularities a minimal toric resolution; in \cite{BGS}, it 
is called $G-$desingularization; we call a toric embedded resolution of $X\subset \Cc^3$ minimal if it corresponds to a minimal toric resolution of $Z_\Gamma.$
Note that in general, a minimal toric embedded resolution of singularities is not unique, but the divisorial valuations associated with the irreducible divisors of 
its exceptional locus are the same for all minimal toric embedded resolution of singularities. We say that these last divisorial valuations are \textit{embedded essential}.\\
We will prove that the toric embedded resolution of $X\subset \Cc^3$ which we construct from jet schemes is minimal for all simple singularities except for $E_8$ (Note that the $E_8$ singularity behaves exceptionally
also from many other points of view  \cite{LeT2}). In particular we describe a bijection between the set of essential components of the jet schemes of simple singularities (except for $E_8$) and the set of embedded essential 
divisorial valuations. This can be thought as a solution of an embedded {version of the} Nash problem. \\

The choice of this class of singularities is related to the following facts:\\

First, for $m$ big enough the number of irreducible components of their $m-$th jet scheme is constant; this simplifies the classification of the irreducible components 
we are interested in, when $m$ varies. This is not the case in general {(see \cite{Mo2}, \cite{Mo3})}.

Second, these singularities are Newton non-degenerate with respect to their Newton polyhedron, and therefore they have a toric embedded resolution; see \cite{AGS},\cite{O1},\cite{O2}. Defining the 
class of irreducible components of jet schemes, mentioned above, is more subtle for Newton degenerate singularities \cite{LMR}. This is related to a conjecture of Teissier on embedding any singularity in 
such way that it can be resolved by a toric morphism {(see \cite{T1},  \cite{T2}, \cite{Mo5})}.\\

We would like to thank Shihoko Ishii, Monique Lejeune-Jalabert, Patrick Popescu-Pampu and Bernard Teissier for several discussions about
this article.

\section{Jet schemes}

In this section, we begin by giving some preliminaries on jet schemes; we then recall from \cite{Mo1}  the structure of jet schemes of rational double point singularities 
and we extract from this stucture some information about particular irreducible components of these jet schemes. These information will be used in the next section 
to obtain the canonical resolution of these singularities. We will all the cases of simple singularities (see their defining equations below); the case of $E_6$ 
(see below the defining equation) has been treated in \cite{Mo1}, 
but we consider it {briefly here}  for the convenience of the reader. \\

Let $k$ be an algebraically closed field of arbitrary characteristic and $X$ be a $k$-algebraic variety. For $m \in \mathbb{N},$ the functor $$F_m :k-Schemes \longrightarrow Sets$$

 $$Spec(A)\longrightarrow Hom_k(Spec A[t]/(t^{m+1}),X)$$
where $A$ is a $k-$algebra, is representable by a $k-$scheme $X_m$ \cite{Is}. $X_m$ is the m-th jet scheme of $X$, and $F_m$ is  isomorphic to its functor of points.
In particular the closed points of $X_m$ are in bijection with the $k[t]/(t^{m+1})$ points of $X$.
\\For $m,p \in \mathbb{N}, m > p$, the truncation homomorphism $A[t]/(t^{m+1}) \longrightarrow A[t]/(t^{p+1})$ induces
a canonical projection $\pi_{m,p}: X_m \longrightarrow X_p.$ These morphisms verify $\pi_{m,p}\circ \pi_{q,m}=\pi_{q,p}$
for $p<m<q$, and they are affine morphisms, they define a projective system whose limit is a scheme that we denote by $X_{\infty}.$ This is the arc space of $X$.
\\ Note that $X_0=X$. We denote the canonical projection $\pi_{m,0}:X_m\longrightarrow X_0$ by $\pi_{m},$ and denote by $\Psi_m$  the canonical morphisms $X_{\infty}\longrightarrow X_m.$ \\

We now assume that $X\subset \Cc^3$ is defined by one of the following equations:
$$ A_n,~n \in \mathbb{N}: xy-z^{n+1}=0.$$  
$$ D_n,~n \in \mathbb{N},n\geq 4: z^2-x(y^2+x^{n-2})=0.$$  
$$ E_6 : z^2+y^3+x^4=0^.$$
$$ E_7 : x^2+y^3+yz^3=0.$$ 
$$ E_8 : z^2+y^3+x^5=0^.$$
We denote by $X_m^0:=\pi_{m,0}^{-1}(O),$ where $O$ is the origin of $\Cc^3;$ it is the singular locus of $X.$

We will now associate a divisorial valuation over $\Cc^3$ with  
the irreducible component of $\mathcal{C}_m\subset X_m^0$ satisfying the 
property $(\star)$ that for every irreducible component 
$\mathcal{C}_{m+1}\subset X^0_{m+1},$
$$\text{if}~~\pi_{m+1,m}(\mathcal{C}_{m+1})\subset \mathcal{C}_m~~ 
\text{then}~~ 
\text{codim}(\mathcal{C}_{m+1})>\text{codim}(\mathcal{C}_{m}).~~~~(\star)$$
For $m\in \mathbb{N},$ let $\psi_m^{a}: \Cc^3_\infty \longrightarrow \Cc^3_m$ be the canonical morphism, here the exponent $a$ stands for {"ambient"}.
For $p\in \mathbb{N},$ we consider the following cylinder in the arc space
$$Cont^p(f)=\{\gamma \in \Cc^3_\infty ;~~ \mbox{ord}_tf\circ \gamma=p\}.$$
Since $\psi_m^{a}$ is a trivial fibration, for every  irreducible component $\mathcal{C}_{m}\subset X_m^0,$ we have that
$$ {\psi_{m}^{a}}^{-1}(\mathcal{C}_{m})\cap Cont^{m+1}(f)$$ is an irreducible component of $Cont^{m+1}(f),$ whenever this last intersection is non-empty. But
from the property $(\star)$ we have  $\text{codim}(\mathcal{C}_{m+1})>\text{codim}(\mathcal{C}_{m}),$ this implies  $$ {\psi_{m}^{a}}^{-1}(\mathcal{C}_{m})\cap~ Cont^{m+1}(f)\not=\emptyset.$$\\
Let $\gamma$ be the generic point of   $ {\psi_m^{a}}^{-1}(\mathcal{C}_{m})\cap ~Cont^{m+1}(f),$
then for every $h\in \Cc[x,y,z],$ we define the map 
$\nu_{\mathcal{C}_{m}}:\Cc[x,y,z]\longrightarrow \mathbb{N},$ as follows
\begin{equation}\label{val}
 \nu_{\mathcal{C}_{m}}(h)=\mbox{ord}_th\circ \gamma.
\end{equation}

It follows from corollary $2.6$ in \cite{ELM}, that $\mathcal{C}_{m}$ is a divisorial valuation (see also \cite{dFEI}, \cite{Re}, prop. 3.7 (vii) applied to ${\psi_{m}^{a}}^{-1}(\mathcal{C}_{m})$).\\ 

Given $m \geq 1,$ with an irreducible component $\mathcal{C}_m$ of $X_m^0,$ we associate the following vector that we call the weight vector:
$$v(\mathcal{C}_m)=(\nu_{\mathcal{C}_m}(x),\nu_{\mathcal{C}_m}(y),\nu_{\mathcal{C}_m}(z))   \in \mathbb{N}^{3}.$$ 
In the following definition, we consider the irreducible components of $X_m^0$ that will be meaningful for the problem we are considering.
\begin{deft}
The following set of particular irreducible components  of  $X_m^0:$
$$EC(X):=\{\mathcal{C}_m\subset X_m^0,m\geq 1 ~~ \text{is an irreducible 
component satisfying $(\star)$ and}$$
\begin{equation} \label{ec}v(\mathcal{C}_m)\not=v(\mathcal{C}_{m-1})~~
\text{for {any component}} ~~\mathcal{C}_{m-1} ~~\text{verifying}~~ 
\pi_{m,m-1}(\mathcal{C}_m)\subset \mathcal{C}_{m-1} \},
\end{equation}
is called the set of essential components of $X.$ \\
We also consider the following set of associated valuations :
$$EE(X):=\{\nu_{\mathcal{C}_m},~~\mathcal{C}_m\in EC(X) \},$$
Where $EE$ stands for Embedded-Essential, to say that these valuations are supposed to appear {in} every embedded toric resolution.

\end{deft}
%%% definir les espaces de jets, le val divisorielles associes, les vecteurs ?aracteristiques%%%
Note that the definitions of the sets $EC(X)$ and $EE(X)$ are ad-hoc to simple singularities.
Indeed{,} the fact that these singularities are non-degenerate 
with respect to their Newton polyhedron
implies that the meaningful valuations   are monomial{;} this 
motivates the definition above of the set $EC(X).$ From another point of view, if we check 
carefully the equations, we will figure out that the components which does not 
belong to the set $EC(X),$ are at their generic points trivial fibrations above
components which belong to $EC(X);$ that is why they are not useful 
and this phenomenon reflects the fact that the singularities we consider are
Newton non-degenerate.\\

Note also that on one hand  we are interested in the embeddings of the
singularities in $\Cc^3$ that are defined by the equations we gave at
the beginning of this section; this includes that we consider the variables 
$x,y,z$ as given and hence the embedding of the torus in $\Cc^3$ is 
also given. On the other hand, since we are interested in toric resolutions, the 
exceptional locus of such a resolution will be defined as the inverse image of 
the complement of the torus, i.e., of the coordinate hyperplanes. So the fact that  
the $x$ axis (defined by $y=z=0$ in $\Cc^3$) and the $y$ axis (defined by $x=z=0$ in $\Cc^3$) belong to the singularities $A_n,$ the $y$ axis 
belongs to the singularities $D_n$ and the $z$ axis belongs to the singularity  
$E_7,$ suggests that if we want to find a toric resolution, we need also to 
consider the jets which go through respectively these coordinate axis. But the family of $m-$th jets 
whose center is a generic point on one of these axis is irreducible because 
our surfaces have isolated singularities at the origin. So the components 
which come from those families are easy to determine; again since our   
singularities are Newton non-degenerate, we are only interested in the vectors
associated with these components, so we only consider such components for small 
$m,$ whenever the associated vector changes. For $m$ large enough, the 
associated vector stabilizes. \\

A careful reading of \cite{Mo4} (and of  \cite{Mo2})  produces a determination of the set $EC$ for 
a rational double point $(X,0).$ We will denote it by $EC(X).$  Let 
$f(x,y,z) \in \Cc[x,y,z]$ be the defining equation of a rational double 
point singularity $X.$ We write
$$f(\sum_{i=0}^{m}x_it^i,\sum_{i=0}^{m}y_it^i,\sum_{i=0}^{m}z_it^i)=\sum_{i=0}^{i=m}F_it^i~~~ mod~~t^{m+1},~~~~~~~(\diamond)$$
then the $m$-th jet scheme $X_m$ of $X$ is naturally embedded in $$\Cc^{3(m+1)}=(\Cc^{3})_m=\text{Spec}\Cc[x_i,y_i,z_i,i=0,\ldots,m]$$
and is defined by the ideal
$I_m=(F_0,F_1,...,F_m).$ From Section $3$ in \cite{Mo4} and section $3.2$ in 
\cite{Mo1}, we are able to determine the set $EC(X)$. This is the subject of 
the  following lemma. We do not treat the $E_8$ singularity (which will be in 
some sense an exception to part of our formulation) because it takes too much 
space; we only give the weight vectors associated with the components in 
$EC(E_8))$ in the corollary that follows the lemma.

\begin{lem}\label{ec}
For an $A_n$  singularity, $EC(A_n)$ is given by the 
components centered at the 
origin $$\{V(x_0,\ldots,x_{l-1},y_0,\ldots,y_{m-l},z_0), 
l=1,\ldots,m~~\mbox{and}~~m=1,\ldots, n\},$$
and the components centered at the $x$ respectively $y$ axis are
$$\{V(y_0,\ldots,y_{l},z_0), 
l=0,\ldots,n\},$$
respectively $$\{V(x_0,\ldots,x_{l},z_0), 
l=0,\ldots,n\}.$$

For a  $D_{2n}$ singularity, $EC(D_{2n})$ is given by the components centered at the 
origin $$\{X_1^0=V(x_0,y_0,z_1),$$
$$X_2^0=V(x_0,y_0,z_0,z_1),$$
$$H_{2k}=V(x_0,y_0,z_0,z_1,y_1,y_2,...,y_{k-1},z_2,z_3,...,z_k),k=2,\ldots,n-1,$$

$$H_{2k+1}=V(x_0,y_0,z_0,z_1,y_1,y_2,...,y_{k},z_2,z_3,...,z_k),k=1,\ldots,n-2,$$

$$L_{2k+1}=V(x_0,x_1,y_0,z_0,z_1,y_1,y_2,...,y_{k-1},z_2,z_3,...,z_{k}),k=1,
\ldots,2n-1\},$$

and the components centered at the $y$-axis are 
$$V(x_0,z_0),~~\mbox{and}~~V(x_0,x_1,z_0).$$
For an $E_6$ singularity we have  
$$EC(E_6)=\{V(x_0,y_0,z_0),V(x_0,y_0,z_0,z_1),V(x_0,y_0,
y_1,z_0,z_1),$$
$$V(x_0,x_1,y_0,y_1,z_0,z_1,z_2),V(x_0,x_1,
y_0 ,y_1,y_2,z_0,z_1,z_2,z_3),$$
$$V(x_0,x_1,x_2,y_0
,y_1,y_2,y_3,z_0,z_1,z_2,z_3,z_4,z_5)\}.$$

For an $E_7$ singularity, 
$EC(E_7)$ is given by the components centered at the 
origin $$\{V(x_0,y_0,z_0),V(x_0,x_1,y_0,z_0),V(x_0,x_1,y_0,y_1,z_0),
V(x_0,x_1,x_2,y_0,y_1,z_0),$$
$$V(x_0,x_1,x_2,y_0,y_1,y_2,z_0),V(x_0,x_1,x_2,y_0,
y_1 , z_0,z_1),V(x_0,x_1,x_2,x_3,y_0,y_1,y_2
z_0,z_1),$$
$$V(x_0,x_1,x_2,x_3,x_4,y_0,y_1,y_2
z_0,z_1),V(x_0,x_1,x_2,x_3,x_4,y_0,y_1,y_2,y_3
z_0,z_1),$$
$$V(x_0,x_1,x_2,x_3,x_4,x_5,y_0,y_1,y_2,y_3
z_0,z_1,z_2),V(x_0,x_1,x_2,x_3,x_4,x_5,x_6,y_0,y_1,y_2,y_3,y_4
z_0,z_1,z_2),$$
$$V(x_0,x_1,x_2,x_3,x_4,x_5,x_6,x_6,x_8,y_0,y_1,y_2,y_3,y_4
z_0,z_1,z_2,z_3)\},$$
and those centered above the $z$ axis 
$$\{V(x_0,y_0),V(x_0,y_0,y_1)\}.$$

\end{lem}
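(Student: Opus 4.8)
The plan is to reduce the determination of $EC(X)$ to a bookkeeping problem built on the explicit irreducible decomposition of $X_m^0$ established in \cite{Mo1}, \cite{Mo2} and \cite{Mo4}. The key observation is that every component listed in the statement is a coordinate linear subspace, i.e.\ of the form $V(S)$ where $S$ is a subset of the coordinates $\{x_i,y_i,z_i\}_{0\le i\le m}$. The first input I would import is therefore, for each ADE type and each $m$, the full list of irreducible components of $X_m^0=V(F_0,\dots,F_m)\cap\{x_0=y_0=z_0=0\}$ provided by the cited structure theorems; this is the only nontrivial external ingredient, and everything else is an explicit computation on these coordinate subspaces.

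Given this decomposition, the weight vector of a component $\mathcal{C}_m=V(S)$ can be read off directly. Since $\psi_m^a$ is a trivial fibration, the generic arc $\gamma$ of ${\psi_m^a}^{-1}(\mathcal{C}_m)\cap Cont^{m+1}(f)$ has, for each variable, all coefficients below the first free index forced to vanish and a free, hence generically nonzero, leading coefficient at that index; thus $\nu_{\mathcal{C}_m}(x)$ equals the least $i$ with $x_i\notin S$, and likewise for $y$ and $z$. I would only need to check that these orders are compatible with $\mathrm{ord}_t f\circ\gamma=m+1$, which holds generically because the leading coefficients are unconstrained. This is precisely where Newton non-degeneracy enters, guaranteeing that the meaningful valuations are the monomial ones recorded by $v(\mathcal{C}_m)$. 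As a sanity check, for $A_n$ the origin component $V(x_0,\dots,x_{l-1},y_0,\dots,y_{m-l},z_0)$ has $v=(l,\,m-l+1,\,1)$ and codimension $m+2$ in $(\mathbb{C}^3)_m$, independent of $l$.

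Next I would verify the two defining conditions. For $(\star)$ the codimension count does the work: passing from level $m$ to level $m+1$ strictly increases the codimension of the relevant coordinate subspaces, so no component of $X_{m+1}^0$ whose image under $\pi_{m+1,m}$ lies in $\mathcal{C}_m$ can have codimension equal to that of $\mathcal{C}_m$. The components one must discard are exactly those sitting as trivial $\mathbb{A}^1$-fibrations over a lower one, and these are detected by equality of codimension. For the weight-vector condition I would identify the finitely many components $\mathcal{C}_{m-1}$ into which $\pi_{m,m-1}(\mathcal{C}_m)$ can fall — for coordinate subspaces these differ from $\mathcal{C}_m$ by shifting at most one vanishing index — and check $v(\mathcal{C}_m)\ne v(\mathcal{C}_{m-1})$ in each case. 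For $A_n$, for instance, the two candidate predecessors of the parameter-$l$ component carry weight vectors $(l,\,m-l,\,1)$ and $(l-1,\,m-l+1,\,1)$, both distinct from $(l,\,m-l+1,\,1)$, so every such component survives; the same mechanism pins down the parameter ranges (e.g.\ $m=1,\dots,n$), beyond which the weight vectors stabilize and no new essential component appears.

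The hard part will be the case-by-case organization for $D_{2n}$ and $E_7$, where $X_m^0$ carries several interacting families ($X_1^0,X_2^0$, the chains $H_{2k},H_{2k+1}$ and $L_{2k+1}$ for $D_{2n}$, and the long list for $E_7$). There the obstacle is not any single computation but controlling how the irreducible decomposition of $X_{m+1}^0$ refines that of $X_m^0$: using the explicit shape of the $F_i$, I would have to follow, level by level, which families split, which acquire a free extra coordinate and hence fail $(\star)$, and at which $m$ a genuinely new weight vector is produced. Once this combinatorial bookkeeping is in place, assembling the four lists — and separately recording the weight vectors for $E_8$ in the corollary that follows — is routine, while the $E_6$ case merely reproduces \cite{Mo1}.
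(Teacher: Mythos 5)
Your proposal takes essentially the same route as the paper: the paper offers no self-contained proof of this lemma, justifying it instead by ``a careful reading'' of the structure theorems for the jet schemes of rational double points in \cite{Mo1}, \cite{Mo2} and \cite{Mo4} (Section 3 of \cite{Mo4}, Section 3.2 of \cite{Mo1}), which is exactly the external input you import, and your recipe for reading off $\nu_{\mathcal{C}_m}$ from the vanishing coordinates of a component is precisely the argument the paper gives in the proof of its Proposition 2.3. The level-by-level verification of $(\star)$ and of the weight-vector condition that you sketch (including the codimension count and the identification of predecessor components) is the bookkeeping the paper leaves implicit in its citation, so your proposal is, if anything, a more explicit version of the paper's own justification.
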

From lemma \ref{ec}, we deduce directly the set $EE.$ First recall  
that a monomial valuation, defined on the ring $\Cc[x,y,z]$ and  
associated with a vector $a=(a_1,a_2,a_3)\in \mathbb{N}^3,$ is defined by:
for $h=\sum_{i=(i_1,i_2,i_3)\in \mathbb{N}^3} c_ix^{i_1}y^{i_2}z^{i_3} \in 
\Cc[x,y,z]$ 
then
\begin{equation}\label{mon}
\nu_{a}(h)=\text{min}_{i\in \mathbb{N}^3;c_i\not=0}~~a_1i_1+a_2i_2+a_3i_3.
\end{equation}

From lemma \ref{ec} we deduce:
\begin{prop}
The valuations that belong to $EE(A_n)$ are monomial. They are associated with the vectors $v(\mathcal{C}_m)$ where $\mathcal{C}_m \in EC(A_n).$  These vectors are 
$$(l,m-l+1,1), l=1,\ldots,m~~\mbox{and}~~m=1,\ldots, n$$
$$(l,0,1),l=1,\ldots,n+1,$$
$$(0,l,1),l=1,\ldots,n+1.$$

The valuations that belong to $EE(D_{2n})$ are monomial. 
They are associated with the vectors $v(\mathcal{C}_m)$ where $\mathcal{C}_m \in 
EC(D_{2n}).$  These vectors are  \\
 $$U_1=(1,1,1), U_2=(1,2,2),\ldots ,U_{n-1}=(1,n-1,n-1),$$
 $$W_1=(1,1,2), W_2= (1,2,3),\ldots ,W_{n-1}=(1,n-1,n),$$
 $$V_1=(2,1,2),V_2=(2,2,3),\ldots V_{2n-2}=(2,2n-2,2n-1),$$
 $$W_0=(1,0,1),V_0=(2,0,1).$$
 
The valuations that belong to $EE(E_6)$ are monomial. They are associated with the vectors $v(\mathcal{C}_m)$ where $\mathcal{C}_m \in EC(E_6):$ \\

$$(1,1,1),(1,1,2),(1,2,2),(2,2,3),(2,3,4),(3,4,6)$$ 

The valuations that belong to $EE(E_7)$ are monomial. They are associated with the vectors $v(\mathcal{C}_m)$ where $\mathcal{C}_m \in EC(E_7):$ \\
 $$(1,1,0),(1,2,0),(1,1,1),(2,1,1),(2,2,1),(3,2,1),(3,3,1),(3,2,2),$$
$$(4,3,2),(5,3,2),(5,4,2),(6,4,3),(7,5,3),(9,6,4)$$ 
 
The valuations that belong to $EE(E_8)$ are monomial. They are associated with the vectors $v(\mathcal{C}_m)$ where $\mathcal{C}_m \in EC(E_8):$ \\
 $$(1,1,1),(1,1,2),(1,2,2),(1,2,3),(2,2,3),(2,3,4),(2,3,5),(2,4,5),(3,4,6),$$
 $$(3,5,7),(3,5,8),4,6,8),(4,6,9),(4,7,10),(5,7,11),(5,8,11)(5,8,12), (5,9,13)$$ $$(5,9,14),(6,10,14), (6,10,15)  $$

\end{prop}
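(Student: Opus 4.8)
The plan is to derive the Proposition directly from Lemma~\ref{ec} by computing, for each essential component $\mathcal{C}_m\in EC(X)$, the associated weight vector $v(\mathcal{C}_m)=(\nu_{\mathcal{C}_m}(x),\nu_{\mathcal{C}_m}(y),\nu_{\mathcal{C}_m}(z))$, and then checking that each such valuation $\nu_{\mathcal{C}_m}$ is in fact the monomial valuation $\nu_{v(\mathcal{C}_m)}$ defined by equation~\eqref{mon}. First I would recall the definition~\eqref{val}: $\nu_{\mathcal{C}_m}(h)=\mathrm{ord}_t\, h\circ\gamma$, where $\gamma$ is the generic point of ${\psi_m^a}^{-1}(\mathcal{C}_m)\cap Cont^{m+1}(f)$. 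The key observation is that for a component of the form $\mathcal{C}_m=V(x_0,\ldots,x_{p-1},y_0,\ldots,y_{q-1},z_0,\ldots,z_{r-1})$, the generic arc $\gamma$ has coordinates $x(t)=x_p t^p+\cdots$, $y(t)=y_q t^q+\cdots$, $z(t)=z_r t^r+\cdots$ with $x_p,y_q,z_r$ generic (hence nonzero), so that $\mathrm{ord}_t\, x\circ\gamma=p$, $\mathrm{ord}_t\, y\circ\gamma=q$, $\mathrm{ord}_t\, z\circ\gamma=r$. Thus $v(\mathcal{C}_m)$ is read off immediately as the triple of vanishing-orders dictated by how many initial coordinates of each variable are set to zero in the defining ideal of $\mathcal{C}_m$.

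Next I would carry out this bookkeeping case by case. For $A_n$, the origin-components $V(x_0,\ldots,x_{l-1},y_0,\ldots,y_{m-l},z_0)$ kill $l$ of the $x_i$, $m-l+1$ of the $y_i$, and one $z_i$, giving $v=(l,m-l+1,1)$; the axis-components give $(l,0,1)$ and $(0,l,1)$, matching the stated lists once the ranges are reconciled (the $m=1,\dots,n$, $l=1,\dots,m$ indexing for the origin, and $l=1,\dots,n+1$ for the axes, the shift by one coming from the extra coordinate forced at the boundary value of $m$). For $D_{2n}$ I would translate each of the families $X_1^0,X_2^0,H_{2k},H_{2k+1},L_{2k+1}$ into its vector, obtaining the $U_j=(1,j,j)$, $W_j=(1,j,j+1)$, $V_j=(2,j,j+1)$ patterns together with the axis-vectors $W_0=(1,0,1)$, $V_0=(2,0,1)$; here one must count the $y$- and $z$-blocks carefully, since the orders $q$ and $r$ grow with $k$. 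For $E_6$ and $E_7$ the six, respectively fourteen, explicitly listed components each yield their vector by the same direct reading.

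The step I expect to be the main obstacle is justifying that $\nu_{\mathcal{C}_m}$ genuinely coincides with the monomial valuation $\nu_{v(\mathcal{C}_m)}$, rather than merely agreeing with it on the three coordinate functions $x,y,z$. In general $\mathrm{ord}_t(h\circ\gamma)$ for the generic arc could exceed $\min_i (a_1i_1+a_2i_2+a_3i_3)$ if cancellation occurred among the leading terms; what rules this out is precisely Newton non-degeneracy of $f$ together with the genericity of the leading coefficients $x_p,y_q,z_r$ of $\gamma$. Concretely, for a generic point of the component the lowest-weight part of $h\circ\gamma$ is a nonzero polynomial in the (generic, hence algebraically independent over the base) leading coefficients, so no cancellation can lower the order below the monomial value; this is the content of the remark in the text that for these Newton non-degenerate singularities the meaningful valuations are monomial. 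Once this is established, the formula~\eqref{mon} applies and the value of $\nu_{\mathcal{C}_m}$ on any $h$ is determined by $v(\mathcal{C}_m)$, so the collection of vectors computed above exhausts $EE(X)$ as monomial valuations, which is exactly the assertion of the Proposition.
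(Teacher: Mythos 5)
Your proposal is correct and takes essentially the same approach as the paper: its proof also passes to the generic point of ${\psi_{m}^{a}}^{-1}(\mathcal{C}_m)\cap Cont^{m+1}(f)$, notes that it has the shape $(x_{a_1}t^{a_1}+\cdots,\,y_{a_2}t^{a_2}+\cdots,\,z_{a_3}t^{a_3}+\cdots)$ with $(x_{a_1},y_{a_2},z_{a_3})$ generic, and concludes $\nu_{\mathcal{C}_m}=\nu_{a}$ directly from the two defining formulas. Your added justification of the no-cancellation step (genericity of the leading coefficients making the initial coefficient of $h\circ\gamma$ a nonzero evaluation of a nonzero polynomial) only spells out what the paper leaves implicit.
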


\begin{proof}
Let $a=(a_1,a_2,a_3)\in \mathbb{N}^3.$ Assume that 
$$\mathcal{C}_m=V(x_0,\ldots,x_{a_1-1},y_0,\ldots,y_{a_2-1},z_0,\ldots,z_{a_3-1}
)\subset \Cc^3_{m}$$ is an irreducible component  of $X_m, $ which 
belongs to $EC(X).$ Then, a generic point of 
${\Psi_{m}^a}^{-1}(\mathcal{C}_m)\cap Cont^{m+1}(f)$ is of the shape 
$(x_{a_1}t^{a_1}+\cdots,y_{a_2}t^{a_2}+\cdots,z_{a_3}t^{a_3}+\cdots)$ where 
$(x_{a_1},y_{a_2},z_{a_3})$ is generic. For $h \in \Cc[x,y,z],$ it 
follows from  the definition of $\nu_{\mathcal{C}_m}(h)$ (see equation 
(\ref{val})) and from the definition of $\nu_{a}(h)$ (see equation (\ref{mon})) 
that $\nu_{\mathcal{C}_m}(h)=\nu_{a}(h).$ The proposition follows.
\end{proof}

\section{Toric minimal embedded resolutions}

In this section, we define   minimal toric embedded 
resolutions, and we give the motivations for this definition.\\ 

Let $N\backsimeq \Z^3$ be a lattice and let $\sigma \subset 
N_{\R^3}=N\bigotimes_\Z \R$ be the strictly convex cone generated by the 
vectors $v_1,... ,v_r$, i.e. $$\sigma=<v_1,\ldots,v_r> = \{\sum_{i=1}^{r} \lambda_i v_i, \ 
\lambda_i \in \R_+\}.$$

Let $f=\sum_{i=(i_1,i_2,i_3)\in \mathbb{N}^3} c_ix^{i_1}y^{i_2}z^{i_3}$
 be a polynomial in $\Cc[x,y,z]$ with $f(0)=0$.  We denote by $NP^+(f)$  its Newton 
polyhedron at the origin: this is the 
convex hull in $\R_+^{3}$ of the set 
$$\{(i_1,i_2,i_3)+\R_+^{3},c_i\not=0\},$$  where we have supposed that $f$ is given by 
its Taylor expansion at the origin and let $NP(f)$ be the union of the compact boundaries of $NP^+(f)$, called Newton boundary. 
This polyhedron and its boundary depend on the 
choice of coordinates. Let $\Gamma(f)$ be the dual fan of $NP(f);$ we will call covectors 
vectors which lie in $\Gamma(f).$  A covector defines a linear map on $\R^3.$ 
For a positive covector $L$,  { i.e. with positive coefficients}, we define the distance $d(L,f)$ as the minimal value of the linear map $L$ at $x\in NP^+(f);$ 
let $\Delta(L,f):=\{ x\in NP^+(f): L(x)=d(L,f)\}$ be the dual face to $L.$  We can then define $f_L(x)$ as the restriction of $f$ to 
the dual face of $L$, $\Delta(L,f);$ that is $$f_L(x)=\sum_{i=(i_1,i_2,i_3)\in \Delta(L,f)} c_ix^{i_1}y^{i_2}z^{i_3}.$$

We can find more details about Newton 
polyhedra, their dual fans and Newton non-degeneracy in \cite{O2}(see also 
\cite{AGS}).\\

\begin{deft} Let $f\in k[x,y,z]$ be such that $f(0)=0.$ We say that the hypersurface $\{f=0\}$ is non-degenerate 
with respect  to  a covector  $L$ if the variety $$F^*(L)=\{u\in 
{k^*}^3 / f_L(u)=0\}$$ is a reduced smooth hypersurface in the torus 
$k^{*3}.$ We say that $f$ is non degenerate at the origin $0$ with respect to its Newton 
Polyhedron $NP(f)$ if it is non degenerate for each covector.
\end{deft}

Recall that a fan $\Sigma$  is said to be regular if every cone $\sigma\in \Sigma$ is regular, i.e. the 
 vectors generating $\sigma$ are part of a basis of $\mathbb{Z}^3$ as a $\mathbb{Z}-$module, or equivalently (for $\sigma$  a cone maximal dimension), if 
 $\sigma=<v_1,v_2,v_3>$ then the absolute value $\mid det(v_1,v_2,v_3)\mid=1.$
A very useful result about toric embedded resolutions of this type of singularities goes as follows. 

\begin{thm}\label{NND}(\cite{Va},\cite{O1},\cite{AGS})
 We consider a pair $X\subset \Cc^3$ where $X=\{f=0\}$ is a Newton non-{degenerate} singularity, then the folllowing properties are equivalent:\\
 1) A subdivison $\Sigma$ of $\Gamma(f)$ is regular,\\
 2) The proper birational morphism $\mu_\Sigma:Z_\Sigma \longrightarrow \Cc^3=Spec~k[x,y,z]$ induced by $\Sigma$ is an embedded resolution of singularities 
 of the pair. Here $Z_\Sigma$ is the toric variety associated with the fan $\Sigma.$
 \end{thm}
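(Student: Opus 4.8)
The plan is to prove Theorem \ref{NND} by reducing the statement about the embedded pair $(X \subset \Cc^3)$ to a statement about the ambient toric morphism $\mu_\Sigma : Z_\Sigma \longrightarrow \Cc^3$, and then checking smoothness of the total transform of $X$ chart by chart in the affine charts of $Z_\Sigma$. First I would recall the standard dictionary of toric geometry: a subdivision $\Sigma$ of $\Gamma(f)$ is regular precisely when each maximal cone is generated by part of a $\Z$-basis of $N$, and in that case the toric variety $Z_\Sigma$ is smooth and $\mu_\Sigma$ is a proper birational morphism (indeed, an iterated sequence of admissible toric blow-ups). Thus the implication that will carry the geometric content is (1) $\Rightarrow$ (2): assuming $\Sigma$ regular, one must show that the strict transform $\widetilde X$ of $X$ is smooth and meets the exceptional divisor transversally, i.e. that the total transform is a normal crossings divisor.

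The key steps, in order, are as follows. Fix a maximal cone $\sigma = \langle v_1, v_2, v_3 \rangle \in \Sigma$; since $\Sigma$ is regular, $(v_1,v_2,v_3)$ is a $\Z$-basis of $N$, and the corresponding affine chart $U_\sigma \cong \Cc^3$ carries monomial coordinates $(u_1,u_2,u_3)$ in which the map $\mu_\Sigma$ is a monomial map determined by the $v_i$. I would then pull back $f$: writing the Newton boundary data, the pullback $\mu_\sigma^* f$ factors as a monomial $u_1^{a_1} u_2^{a_2} u_3^{a_3}$ times a residual polynomial $\tilde f$, where the exponents $a_i = d(v_i, f)$ are the distances of $v_i$ computed against $NP^+(f)$, and $\tilde f$ is the strict transform. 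The monomial factor cuts out (part of) the exceptional divisor, which is already normal crossings because the $u_i$ are coordinates. What remains is to show $\tilde f$ is smooth and transverse to the coordinate hyperplanes in $U_\sigma$. Here the crucial point is that the lowest-order part of $\tilde f$ along any coordinate stratum is governed by the restriction $f_L$ of $f$ to the dual face $\Delta(L,f)$ for the appropriate covector $L$ in the relative interior of the relevant face of $\sigma$; the Newton non-degeneracy hypothesis says exactly that $f_L$ defines a reduced smooth hypersurface in the torus $(\Cc^*)^3$. Translating this non-degeneracy through the monomial change of coordinates shows that $\tilde f = 0$ is smooth and meets the torus-invariant strata transversally in $U_\sigma$, giving normal crossings. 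Running this over all maximal cones $\sigma$ of $\Sigma$, and using that the faces of the $\sigma$ cover $\Gamma(f)$ so that every covector $L$ relevant to $f$ appears, assembles the local pictures into an embedded resolution.

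For the converse (2) $\Rightarrow$ (1), I would argue contrapositively: if some cone $\sigma \in \Sigma$ fails to be regular, then the affine chart $U_\sigma$ is itself singular as a toric variety, and one checks that the total transform of $X$ over the corresponding stratum cannot be normal crossings in a neighbourhood of the singular point of $Z_\Sigma$ — the failure of $|\det(v_1,v_2,v_3)| = 1$ forces either $Z_\Sigma$ to be singular along the exceptional locus or the strict transform to acquire a non-reduced or singular structure there. Since an embedded resolution requires in particular that the ambient space be smooth, this already rules out non-regular $\Sigma$.

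The main obstacle I expect is the heart of the forward direction: verifying cleanly that after the monomial substitution, the non-degeneracy of each face restriction $f_L$ translates into simultaneous smoothness of $\tilde f$ and transversality to \emph{all} the coordinate hyperplanes of the chart, including along deeper strata (edges and the origin of $U_\sigma$), not just the top-dimensional faces. Organizing this so that every stratum of every chart is matched with the correct covector $L \in \Gamma(f)$ and the correct dual face $\Delta(L,f)$, and confirming the claimed clean factorization $\mu_\sigma^* f = (\text{monomial}) \cdot \tilde f$ with $\tilde f$ not identically vanishing on the strata, is the delicate bookkeeping. However, since this equivalence is a known result quoted from \cite{Va}, \cite{O1}, \cite{AGS}, I would either cite those sources for the detailed verification or reproduce only the chart computation for a representative cone, noting that the general case is identical.
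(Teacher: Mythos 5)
The paper does not prove Theorem \ref{NND} at all: it is quoted as a known result with the references \cite{Va}, \cite{O1}, \cite{AGS}, so there is no internal argument to compare yours against. Your sketch is, in outline, the standard proof found in those sources (see also \cite{O2}): in each chart $U_\sigma$ of a regular subdivision one factors the pullback as $\mu_\sigma^* f = u_1^{d(v_1,f)}u_2^{d(v_2,f)}u_3^{d(v_3,f)}\,\tilde f$, and Newton non-degeneracy of the face restrictions $f_L$ gives smoothness of $\{\tilde f=0\}$ and transversality to the torus-invariant strata; for the converse, the toric smoothness criterion (a toric variety is smooth if and only if its fan is regular) shows a non-regular $\Sigma$ yields a singular ambient $Z_\Sigma$, which is incompatible with being an embedded resolution. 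Two remarks to tighten your sketch. First, the hypothesis that $\Sigma$ subdivides $\Gamma(f)$ (rather than being an arbitrary regular fan with the same support) is what guarantees that each cone of $\Sigma$ sits inside a single cone of $\Gamma(f)$, so that the dual face $\Delta(L,f)$ is constant as $L$ ranges over the relative interior of any cone of $\Sigma$; this is exactly what makes the leading part of $\tilde f$ along each stratum governed by a single $f_L$, and your phrasing ("the faces of the $\sigma$ cover $\Gamma(f)$") should be replaced by this containment statement, since the bookkeeping you flag as delicate fails without it. Second, in the converse direction the clause about the strict transform "acquiring a non-reduced or singular structure" is unnecessary and not obviously true as stated; the failure of ambient smoothness alone already contradicts the definition of an embedded resolution, so you should drop that alternative rather than leave it as an apparent proof obligation.
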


  Note that the morphism $\mu_\Sigma$ is explicit in term of $\Sigma;$ 
 more precisely, a regular cone $\sigma=<v_1,v_2,v_3>\subset \Sigma$ of maximal dimension determines an affine chart of $Z_\Sigma$ which is isomorphic to 
 the affine space 
 $\Cc^3=Spec~k[q,r,s]$ and the rectriction of $\mu_\Sigma$ to this chart is given by
 $$ x=q^{v_{1,1}}r^{v_{2,1}}s^{v_{3,1}}$$
 $$y=q^{v_{1,2}}r^{v_{2,2}}s^{v_{3,2}}$$
 $$z=q^{v_{1,3}}r^{v_{2,3}}s^{v_{3,3}},$$
where $v_i= (v_{i,1},v_{i,2},v_{i,3}).$ Moreover, an edge of $\Sigma$ (a cone of dimension $1$) determines an orbit of $Z_\Sigma$ whose Zariski closure
defines a divisor on $Z_\Sigma$ which is an irreducible component of the exceptional divisor of $\mu_\Sigma$ (\cite{O}).

And the  strict transform of $X=\{f=0\}$ is defined as follows:
\begin{deft}\label{st} Let $\Sigma$ be a regular subdivision of $\Gamma(f).$ 
The strict transform of $\{f=0\}$ by $\mu_\Sigma$ is the Zariski closure of $(\mu_\Sigma)^{-1} ({\C^*}^3 \cap \{f=0\}).$ 
\end{deft}
  
We now recall the notion of ``essential divisor''\textcolor{red}{,} which is one of the motivations of this work. For a singular variety, we can find infinitely many resolutions of
singularities; the notion of essential divisor searches for intrinsic data in all resolutions of singularities of a given singular variety.

\begin{deft}(Essential divisors)\\
Let $X$ be a singular variety and $\pi: (\tilde{X},E)\rightarrow (X,sing(X))$ be a resolution of singularities of $(X,Sing(X).$ Let $E_i \in E$ be a divisor
which is an irreducible component of the exceptional locus $E$.
 We say that $E_i$ is an essential divisor if for any other  resolution of singularities 
 $\pi':(X',E')\rightarrow(X,Sing\ X),$ the center on $X'$ of the divisorial valuation determined by $E_i$ is an irreducible component of $E'$. 
A divisor $E_i$ is said  inessential if it is not essential.
\end{deft}

It should be noted that the notion of essential divisor given above is associated with abstract resolutions of singularity and not 
with embedded resolutions of singularities. For surface singularities, a divisor is essential if it appears on the minimal resolution of singularities. Note that a minimal resolution does not exist in general for a
singular variety of dimension larger than $2.$ 

For normal toric singularities, it is possible to determine essential divisors. For that we need the following definition
of a minimal system of generators of a cone.\\
Let $N \cong 
\textbf{Z}^n$ 
be a lattice and let $N_{\textbf{R}}$ be the real vector space $N\otimes_\textbf{Z}\textbf{R}.$

\begin{deft}(\cite{BGS}, see also \cite{AM}) Let $\sigma \subset N_{\textbf{R}} $ be a strongly convex rational polyhedral  cone.
 The minimal system of generators of $\sigma$ is the set:
$$ G_\sigma =\{x\in \sigma\cap N\backslash{0}~~|~~ \forall n_1,n_2 \in  \sigma\cap 
N, x=n_1+n_2 \Rightarrow n_1=0 \ or\ n_2=0 \}$$

An element in $G_{\sigma}$ is also called irreducible. It is 
primitive by definition.
\end{deft}
It follows from propostion $1.3$ in \cite{BGS},  that the elements of $G_\sigma$ appear on every regular fan which subdivides $\sigma$ as an extremal vector,
i.e. a primitive generator of a $1-$dimensional cone of the fan. Note that such a subdivision determines a (toric) resolution of singularities of the toric variety
defined by $\sigma,$ and dimension $1$ cones determine the irreducible components of the exceptional divisors. This gives a feeling for the following theorem which 
characterizes essential divisors on toric varieties.  

\begin{thm}\label{BG}(\cite{BGS}, \cite{IK})\\
Let $\sigma \subset N_{\textbf{R}} $ be a strongly convex rational polyhedral  cone. The minimal system of generators $G_\sigma$ is in bijection with the set of essential divisors of  
the toric variety $V_\sigma$. Therefore a primitive vector is in $G_\sigma$ if and only if it is an extremal vector of any regular subdivision
of the cone $\sigma$. 
\end{thm}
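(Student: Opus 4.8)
The plan is to translate the statement into the combinatorics of the fan and then treat separately the two inclusions $G_\sigma \subseteq \{\text{essential divisors}\}$ and its reverse, postponing to the end the comparison between toric and arbitrary resolutions. First I would recall the standard dictionary: every primitive vector $v \in \sigma\cap N$ determines a torus-invariant divisorial valuation $\mathrm{ord}_v$, namely the monomial valuation with $\mathrm{ord}_v(\chi^u)=\langle u,v\rangle$, and these are exactly the toric divisorial valuations over $V_\sigma$. A toric resolution of $V_\sigma$ is the same datum as a regular subdivision $\Sigma$ of $\sigma$, its exceptional prime divisors corresponding to the rays of $\Sigma$; moreover the center of $\mathrm{ord}_v$ on $V_\Sigma$ is an irreducible component of the exceptional locus (over $\mathrm{Sing}(V_\sigma)$) if and only if $v$ spans a ray of $\Sigma$, and otherwise $v$ lies in the relative interior of a cone of $\Sigma$ of dimension $\geq 2$ and its center has codimension equal to that dimension. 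Thus, restricting for the moment to toric resolutions, $\mathrm{ord}_v$ has codimension-one center on every toric resolution precisely when $v$ is an extremal vector of every regular subdivision of $\sigma$.

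For the inclusion $G_\sigma \subseteq \{\text{essential}\}$ I would invoke Proposition $1.3$ of \cite{BGS}, recalled just before the theorem: every $v \in G_\sigma$ is an extremal vector of every regular subdivision of $\sigma$, so by the dictionary $\mathrm{ord}_v$ has a codimension-one center on every toric resolution of $V_\sigma$. For the converse on the toric level I would show that a primitive $v \notin G_\sigma$ can be avoided: since such a $v$ is reducible, $v=n_1+n_2$ with $n_1,n_2 \in (\sigma\cap N)\setminus\{0\}$, one starts from the coarsest subdivision of $\sigma$ whose rays are the elements of $G_\sigma$ (the fan over the compact faces of $\mathrm{conv}((\sigma\cap N)\setminus\{0\})$) and refines it to a regular fan by star-subdivisions chosen so as never to introduce $v$ as a ray; in the resulting regular subdivision $v$ lies in the relative interior of a cone of dimension $\geq 2$, so $\mathrm{ord}_v$ is not a divisor there and hence is not essential. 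This identifies $G_\sigma$ with the set of toric-essential valuations.

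It then remains to pass from toric resolutions to arbitrary ones, that is, to show that a valuation which is a divisor on every toric resolution is a divisor on every resolution of $V_\sigma$. One implication is immediate, since toric resolutions are among all resolutions, so an essential divisor is a fortiori toric-essential. The reverse implication — that a minimal generator cannot be made to acquire a higher-codimension center by passing to a possibly non-toric resolution — is the substantial point and is exactly the content of the theorem of Ishii and Koll\'ar \cite{IK}, which I would apply as a black box. I expect this comparison between the toric and the general class of resolutions to be the main obstacle: the combinatorial argument only controls toric modifications, whereas essentiality is a condition over all resolutions, and bridging this gap is precisely where the arc-space/Nash-type analysis of \cite{IK} is required. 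Combining the three steps yields $G_\sigma = \{\text{essential divisors of } V_\sigma\}$, and the final equivalence in the statement is then just the reformulation of this equality through the dictionary relating rays of regular subdivisions to divisorial centers.
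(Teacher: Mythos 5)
Your proposal follows essentially the same route as the paper, which does not actually prove this theorem but imports it: the sentence following the theorem attributes toric essentiality of the elements of $G_\sigma$ to \cite{BGS} (via Proposition 1.3, recalled just before the statement) and the promotion from toric-essential to essential for arbitrary resolutions to \cite{IK}, which is precisely your decomposition and your use of the two references. One small caveat: in your avoidance argument the fan over the compact faces of $\mathrm{conv}\bigl((\sigma\cap N)\setminus\{0\}\bigr)$ has rays only at the \emph{vertices} of that hull, which can be a proper subset of $G_\sigma$ (for $\sigma=\langle(1,0),(1,2)\rangle$ the minimal generator $(1,1)$ lies in the relative interior of the unique compact face), so that direction is better cited from \cite{BGS}, as the paper does, than re-derived by hand.
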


Actually, the fact that the divisors which correpond to elements of $G_\sigma$ are toric essential (i.e. essential 
for toric resolutions of singularities) follows from \cite{BGS}  and the fact that they are essential for all resolutions of singularities follows from \cite{IK}. 
It also follows from \cite{BGS} that for a three dimensional toric variety defined by a cone $\sigma$ there exists a toric 
resolution of singularities which is associated with a subdivision of $\sigma$ all of whose $1$-dimensional cones are rays associated to elements of $G_\sigma.$ Such a resolution is called a $G_\sigma$-resolution.
We think of such a resolution as a minimal toric resolution. Note that such a resolution does not exist in general for normal toric varieties of dimension 
larger than $3.$ Together with theorems \ref{NND} and \ref{BG}, this discussion motivates the following two definitions:

\begin{deft}(Toric embedded-essential divisors)\\
 Let $X\subset \Cc^3$ where $X=\{f=0\}$ is a Newton non-degenrate singularity. A toric embedded essential divisor $E$ is a divisor which appears on every toric resolution of singularities of $X$(such a resolution exists thanks to theorem \ref{NND}) as an irreducible component of the exceptional divisor.
This is equivalent to say that $E$ corresponds to an element of $G_\sigma$ for some  cone  $\sigma\in \Gamma(f)$  of maximal dimension.
\end{deft}

\begin{deft}(Minimal toric embedded resolution) Let $X=\{f=0\}\subset  \Cc^3$ be a Newton non-degenerate singularity at the origin. 
A toric embedded resolution of $X\subset \Cc^3$  which is  associated with a subdivision $\Sigma$ of $\Gamma(f)$ 
is minimal if the only $1-$dimensional cones which  appear in $\Sigma$ are   determined by elements of $G_\sigma$ for some  cone  $\sigma\in \Gamma(f)$  of maximal dimension and the $1-$dimensional cones of $\Gamma(f).$ This is equivalent to say that all the irreducible components of the exceptional divisor of a minimal toric embedded resolution are toric embedded essential divisors.
\end{deft}
Note that these definitions make sense also for non-isolated singularities as in \cite{ACT} for instance.

\section{ Toric minimal embedded resolutions for simple singularities}

In this section, we construct toric embedded resolutions from the data of the jet schemes of simple singularities that we gave 
in section 2. We also prove that for all singularities of  this type, except for the $E_8$ singularity,
the essential components of the jet schemes (introduced in section 2) correspond to toric embedded-essential divisors. We will treat each type of {singularity} separately: first we construct a toric resolution of singularities using  the weight vectors associated in section 2 with essential components, then
we construct a regular subdivision of the dual fan of the singularity embedded in $\Cc^3,$ and finally by using a theorem from \cite{BGS} (see also \cite{AM}) which characterizes toric essential divisor 
for toric varieties, we will prove for simple singularities (except for the singularity of type $E_8$) the minimality of our toric embedded resolution of singularities.

\begin{remark}
Instead of drawing the dual fan of each polyhedron in $\R^3$ (with coordinates $x,y,z)$  we use the trace of the dual fan on the hyperplane defined by the equation $x+y+z=1.$ Vectors are then represented by points. We call this trace the {\bf Newton  face} or simply, with some abuse of notation {\bf dual fan}.
\end{remark}

\subsection{The $A_n$ singularities}
\begin{thm}
The weight vectors of $A_{n}$ give a   toric minimal embedded resolution of the singularity. In particular the set of essential components $EC(A_{n})$ is in bijection with the divisors which appear on every toric minimal embedded resolution of these singularities. 
\end{thm}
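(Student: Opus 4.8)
The plan is to reduce the whole statement to an explicit combinatorial computation in the dual fan $\Gamma(f)$ of $f=xy-z^{n+1}$ and then to match the outcome with the list of weight vectors produced in the preceding Proposition. First I would describe $\Gamma(f)$: the Newton boundary of $f$ is the single compact edge joining $(1,1,0)$ and $(0,0,n+1)$, so its dual fan subdivides the first octant of covector space into exactly two three-dimensional cones, separated by the hyperplane $\{a+b=(n+1)c\}$. Writing a covector as $(a,b,c)$, these are the simplicial cone $C_1=\{a+b\le (n+1)c\}=\langle (0,0,1),(n+1,0,1),(0,n+1,1)\rangle$ dual to the vertex $(1,1,0)$, and the non-simplicial cone $C_2=\{a+b\ge (n+1)c\}$ with extremal rays $e_1,e_2,(n+1,0,1),(0,n+1,1)$ dual to $(0,0,n+1)$, where $e_1,e_2,e_3$ are the standard basis vectors. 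The rays of $\Gamma(f)$ itself are thus $e_1,e_2,e_3$ together with $(n+1,0,1)$ and $(0,n+1,1)$.

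The heart of the argument is to compute the minimal generating sets $G_{C_1}$ and $G_{C_2}$ of Theorem \ref{BG}. The key observation is a ``height'' argument on the last coordinate: if a lattice point has $c$-coordinate equal to $1$, then any decomposition $u+v$ inside either cone forces one summand to lie on the wall $\{c=0\}$; the intersection of this wall with $C_1$ is only the origin, while its intersection with $C_2$ is the whole quadrant $\langle e_1,e_2\rangle$. From this I would deduce that every height-one lattice point of $C_1$ is irreducible, so $G_{C_1}=\{(a,b,1): a,b\ge 0,\ a+b\le n+1\}$; and that in $C_2$ a height-one point $(a,b,1)$ is irreducible precisely when $a+b=n+1$ (the wall shared with $C_1$), while $e_1,e_2$ are the only irreducible points of height $0$, giving $G_{C_2}=\{e_1,e_2\}\cup\{(a,b,1): a,b\ge 0,\ a+b=n+1\}$. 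A short peeling argument (subtract a suitable height-one point and stay inside the cone) then shows that no lattice point of height $\ge 2$ is irreducible, so these lists are complete. Consequently
$$(G_{C_1}\cup G_{C_2})\setminus\{e_1,e_2,e_3\}=\{(a,b,1):\ a,b\ge 0,\ (a,b)\neq(0,0),\ a+b\le n+1\},$$
which is exactly the set of weight vectors attached to $EC(A_n)$ in the preceding Proposition (the three families $a,b\ge 1$, $b=0$, $a=0$ reassemble precisely this set).

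It remains to convert this equality into the two assertions. By the three-dimensional statement of \cite{BGS} recalled in the text, $C_1$ and $C_2$ each admit a regular subdivision all of whose rays lie in $G_{C_1}$, resp.\ $G_{C_2}$; moreover the induced subdivision of the shared wall $\langle (n+1,0,1),(0,n+1,1)\rangle$ is forced, since its relevant lattice points are collinear and consecutive pairs span unimodular two-cones, so the two subdivisions glue into a regular subdivision $\Sigma$ of $\Gamma(f)$ whose rays are exactly $G_{C_1}\cup G_{C_2}$. By Theorem \ref{NND}, $\mu_\Sigma$ is an embedded resolution, and by construction it is minimal; its exceptional divisors are the rays other than $e_1,e_2,e_3$, i.e.\ precisely the weight vectors of $EC(A_n)$. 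Finally, since the assignment $\mathcal{C}_m\mapsto v(\mathcal{C}_m)$ is injective on $EC(A_n)$ (immediate from the explicit list), Theorem \ref{BG} turns it into a bijection between $EC(A_n)$ and the toric embedded-essential (exceptional) divisors appearing on every minimal toric embedded resolution. I expect the main obstacle to be the determination of $G_{C_2}$, because $C_2$ is not simplicial; the height/peeling argument is what makes this tractable, and one must be careful to confirm both that no ``interior'' irreducible vector is overlooked and that nothing at height $\ge 2$ survives.
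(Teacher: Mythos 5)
Your proposal is correct, but it follows a genuinely different route from the paper's proof. The paper works constructively: it orders the weight vectors, gives an explicit algorithm for inserting them one at a time into the dual fan (first $(1,1,1)$, then the vectors $(k,0,1)$ and $(0,k,1)$, then the wall and interior vectors), and certifies regularity of the resulting subdivision by a list of explicit $3\times 3$ determinant computations; minimality is then obtained almost for free, by noting that the weight vectors lie at height one over the base of the simplicial cone $\langle e_3,(n+1,0,1),(0,n+1,1)\rangle$, hence are irreducible, and by applying Theorem \ref{BG} in the direction opposite to yours: every element of $G_\sigma$ must occur as a ray of \emph{every} regular subdivision, and the constructed subdivision uses only weight vectors, so no other irreducible vectors can exist. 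You never construct a subdivision at all: you compute the minimal generating sets $G_{C_1}$ and $G_{C_2}$ a priori and in full (height-one irreducibility, the peeling step excluding height $\geq 2$, and the height-zero analysis in the non-simplicial cone $C_2$), match their union with the weight-vector list, and then invoke the existence of $G$-desingularizations of three-dimensional cones from \cite{BGS}, glued along the common wall. What the paper's approach buys is an explicit fan, used later for the figures and the remark on abstract resolutions, at the price of many determinant verifications and a rather implicit treatment of $C_2$; what your approach buys is a determinant-free argument that also yields the complete Hilbert-basis-type description of both $G$-sets (the paper only ever proves the inclusion ``weight vectors are irreducible'' and gets the reverse inclusion from its explicit subdivision), at the price of being non-constructive and of requiring the gluing step. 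That gluing step is the one point you must write out carefully: it works because the rays of either $G$-resolution lying in the wall are among the collinear height-one points $(a,n+1-a,1)$, and only consecutive pairs of these span unimodular two-dimensional cones, so both $G$-resolutions induce the same (complete) subdivision of the wall and therefore patch into a regular refinement of the whole fan $\Gamma(f)$; as sketched, this is correct, and with it your argument is complete.
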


The Newton polyhedron and its dual fan are the following:

\begin{figure}[h]
\begin{center}
\setlength{\unitlength}{0.40cm}

\includegraphics[scale=0.4]{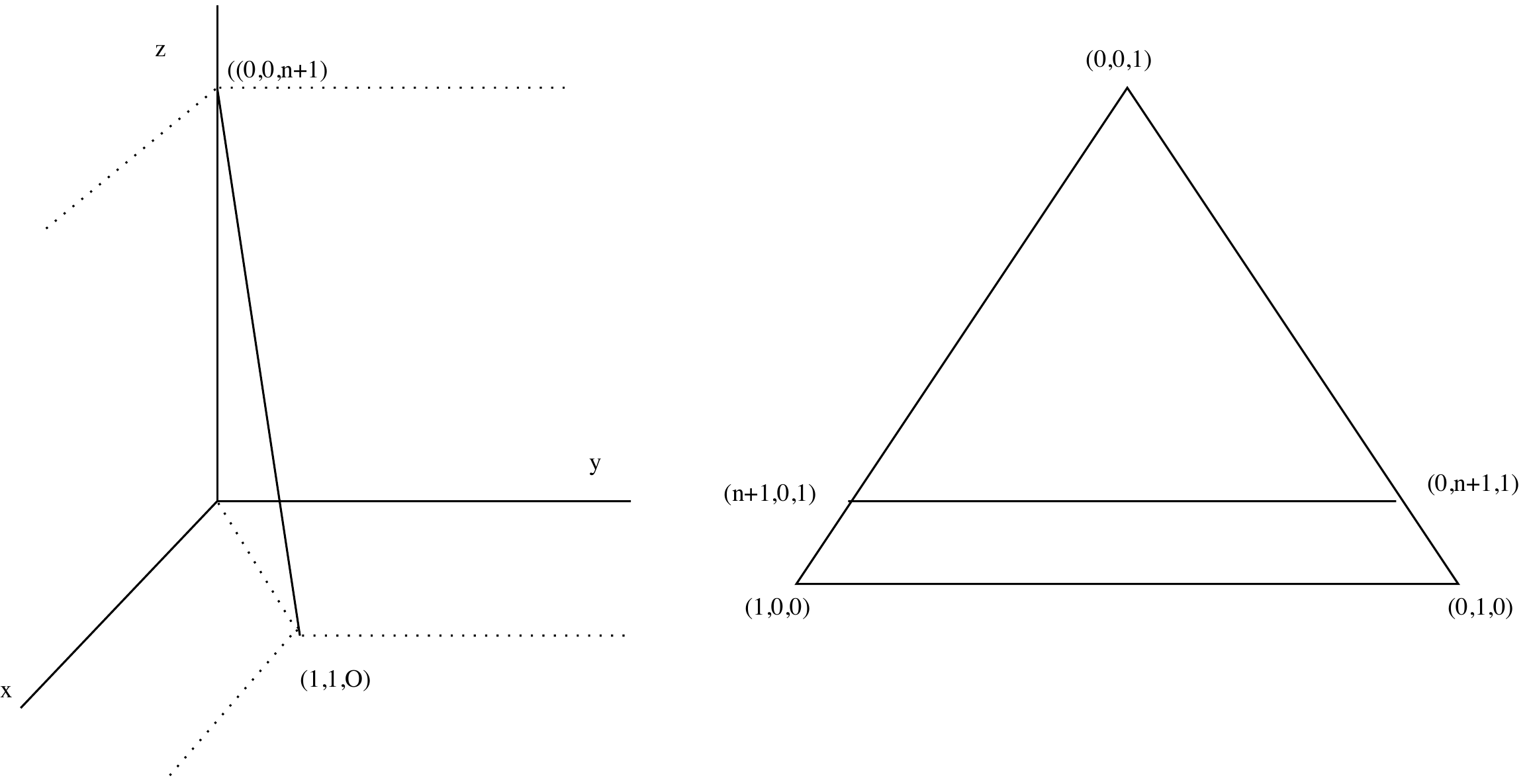}
\end{center}
%\caption{ Newton dual polyhedron of $A_n$}
\end{figure}

 The dual fan is not simplicial; a resolution of this singularity is given by  a regular and simplicial subdivision of the Newton dual fan.\\
%Studying the paper {\cite{Mo1}} of the first author, one can compute the weight vectors $EE(A_n) :
{ The set  $EE(A_n) $ of valuations, found after a careful reading of {\cite{Mo1}}, is  $$(l,m-l+1,1), l=1,\ldots,m~~\mbox{and}~~m=1,\ldots, n$$
$$(l,0,1),l=1,\ldots,n+1,$$
$$(0,l,1),l=1,\ldots,n+1.$$     (see lemma $ 2.2$ and  proposition $2.3$). We reorder this set  in the following way in order to study them more easily (see the remarks below):}

${EE(A_n)}:=\{ (n,0,1),\dots (1,0,1),(0,n,1),\dots ,(0,1,1),(1,1,1),
(2,1,1),\dots ,(n,1,1),$
 $$(1,2,1)\dots (1,n,1) ,(2,2,1),(3,2,1)\dots (n-1,2,1),$$
$$(2,3,1)\dots (2,n-1,1)\dots (\lceil \frac{n+1}{2}\rceil,\lceil \frac{n+1}{2}\rceil,1)   \}      $$
to which we add , if $n$ is even,  $(\lceil \frac{n+1}{2}\rceil +1,\lceil \frac{n+1}{2}\rceil,1)$ et $(\lceil \frac{n+1}{2}\rceil ,\lceil \frac{n+1}{2}\rceil +1,1)$.\\

%As the $y$ axis and $x$ axis are in the surface and contains the singularity,  it also contains  the vectors $(n,0,1),\dots (1,0,1)$ and  $(0,n,1),\dots ,(0,1,1)$ .

 The weight vectors are positioned on the following figure : \\\\

\begin{figure}[h]
\begin{center}
\setlength{\unitlength}{0.50cm}

\includegraphics[scale=0.4]{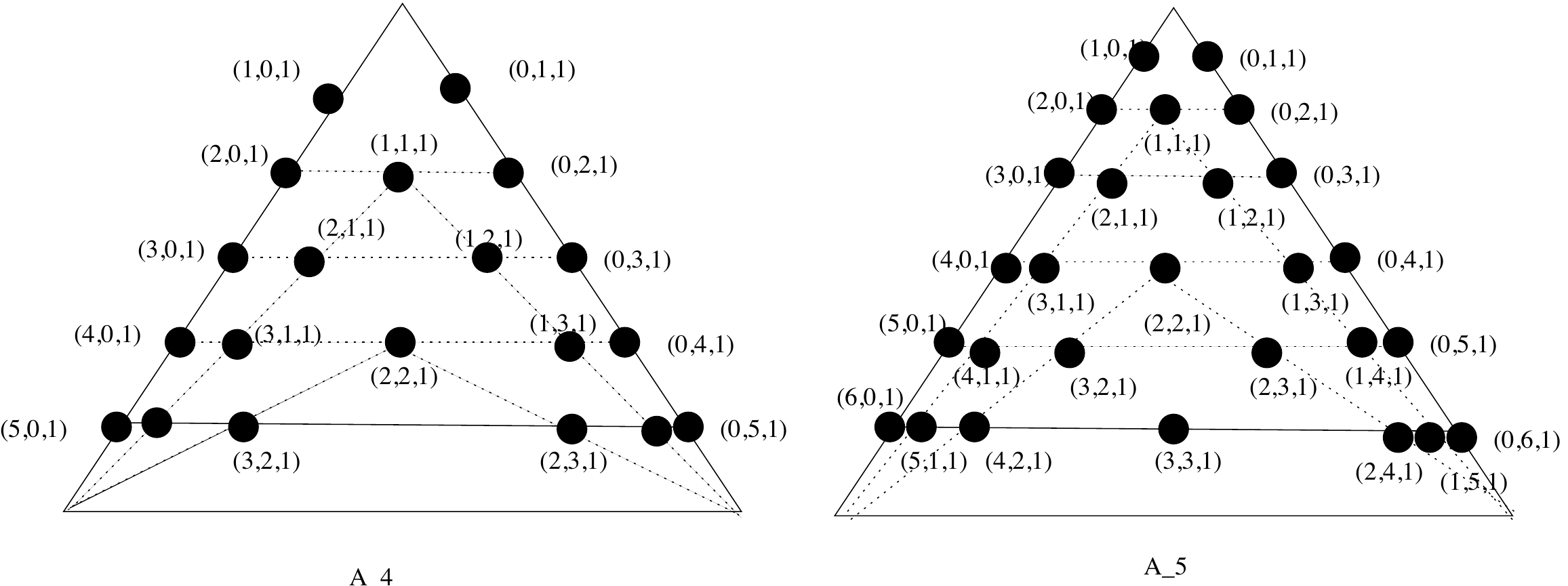}

\end{center}
\caption{ Positions of $ EE(A_4)$ and $ EE(A_5)$}
\end{figure}

{\bf CLAIM: these vectors are irreducible,  i.e. they represent essential components in the resolution.}\\
Before proving this claim, we study the position of the vectors in the Newton face of  dual fan, and show that they provide a toric embedded resolution.\\

{\bf Some remarks on the vectors of $EE(A_n)$}:\\
(with abuse of notation, the vectors are considered as points on the Newton face of the cone $[(e_3,(n+1,0,1),(0,n+1,1)]$).\\

\begin{enumerate}
\item We first prove that the vectors are rational sums of $e_1,e_2,e_3,v_1=(n+1,0,1),v_2=(0,n+1,1)$. By the symmetry of the singularity, we only have to look at half of the vectors:\\
we have : $$(k,l,1)=\frac{k}{n+1}v_1+\frac{l}{n+1}v_2+\frac{n-k-l+1}{n+1}e_3$$
for all $1\le k\le n$ and $1\le l \le n-k+1$.\\\\

Moreover $(k,0,1)=\frac{k}{n+1}v_1+\frac{n}{n+1}e_3$.\\
This implies that they all belong to the cone $(e_3,(n+1,0,1),(0,n+1,1))$.
\item One can remark the symmetry we obtain. It will follow that we have to do only half of the computations;
\item The vectors $(1,1,1)\dots (n,1,1)$ are on the line $((1,1,1),(1,0,0))$ as $(k,1,1)=(1,1,1)+(k-1)(1,0,0)$.  The same holds for $(k,k,1)\dots (n+1-k,k,1)$ for $k\le \lceil \frac{n+1}{2}\rceil$ ;
\item Homogeneous vectors $(\alpha,\beta,\gamma)$ such that $\alpha +\beta+ \gamma=k$, for $3\le k \le n+2$ are also on the same line.
\end{enumerate}

All these remarks show us a way to obtain an almost symmetric resolution from the vectors. Many algorithms  work, we propose {one of them} to the reader.\\\\

{\bf ALGORITHM}\\
First place $(1,1,1)$ and make the triangle $((0,0,1),(n+1,0,1),(0,n+1,1))$ simplicial.\\
Then place successively the vectors $(k,0,1),\dots (1,0,1)$ , joining each new vector to $(1,1,1)$, to make the resolution simplicial. Do the same symmetrically with $(0,k,1),\dots, (0,1,1)$.\\
One has $$(k,0,1)=\frac{k}{n+1}(n+1,0,1)+\frac{n+1-k}{n+1} (0,0,1)$$
Thus these vectors are primitive vectors.\\

For $A_{2n}$: \\
Place the vectors $(n,1,1)$ and $(1,n,1)$ and the corresponding edges. One can remark that the vectors $(k,1,1)$ lie on the line $(n,1,1),(1,1,1))$. It is at this point that the resolution is not symmetrical: if one chooses  $(n,1,1)$, then one makes the trapezium simplicial by adding the edges $((n,1,1),(0,1,0))$ and $((n,1,1),(1,0,0))$. So in the toric resolution, one will have the edges joining the vectors on $((n+1,0,1),(0,n+1,1))$ to $(0,1,0)$. Conversely, if one  chooses $(1,n,1)$, one will have the edges joining the vectors on $((n+1,0,1),(0,n+1,1))$ to $(1,0,0)$.\\
For $A_{2n+1}$, one had $[(\frac{n+1}{2},\frac{n+1}{2},1)] $ first, the trapezium thus becomes divided into three parts and the resolution become symmetrical.\\

Then add alternatively $(2,2,1), (n-1,2,1),(2,n-1,1),(3,3,1)\dots (\lceil \frac{n+1}{2}\rceil ,\lceil \frac{n+1}{2}\rceil,1)$,and if $n$ is even $(\lceil \frac{n+1}{2}\rceil+1 ,\lceil \frac{n+1}{2}\rceil ,1)$,$(\lceil \frac{n+1}{2}\rceil ,\lceil \frac{n+1}{2}\rceil +1,1)$ .
Then add the remaining vectors in the natural order, with the corresponding edges.\\\bigskip

\begin{figure}[h]
\begin{center}
\setlength{\unitlength}{0.50cm}

\includegraphics[scale=0.8]{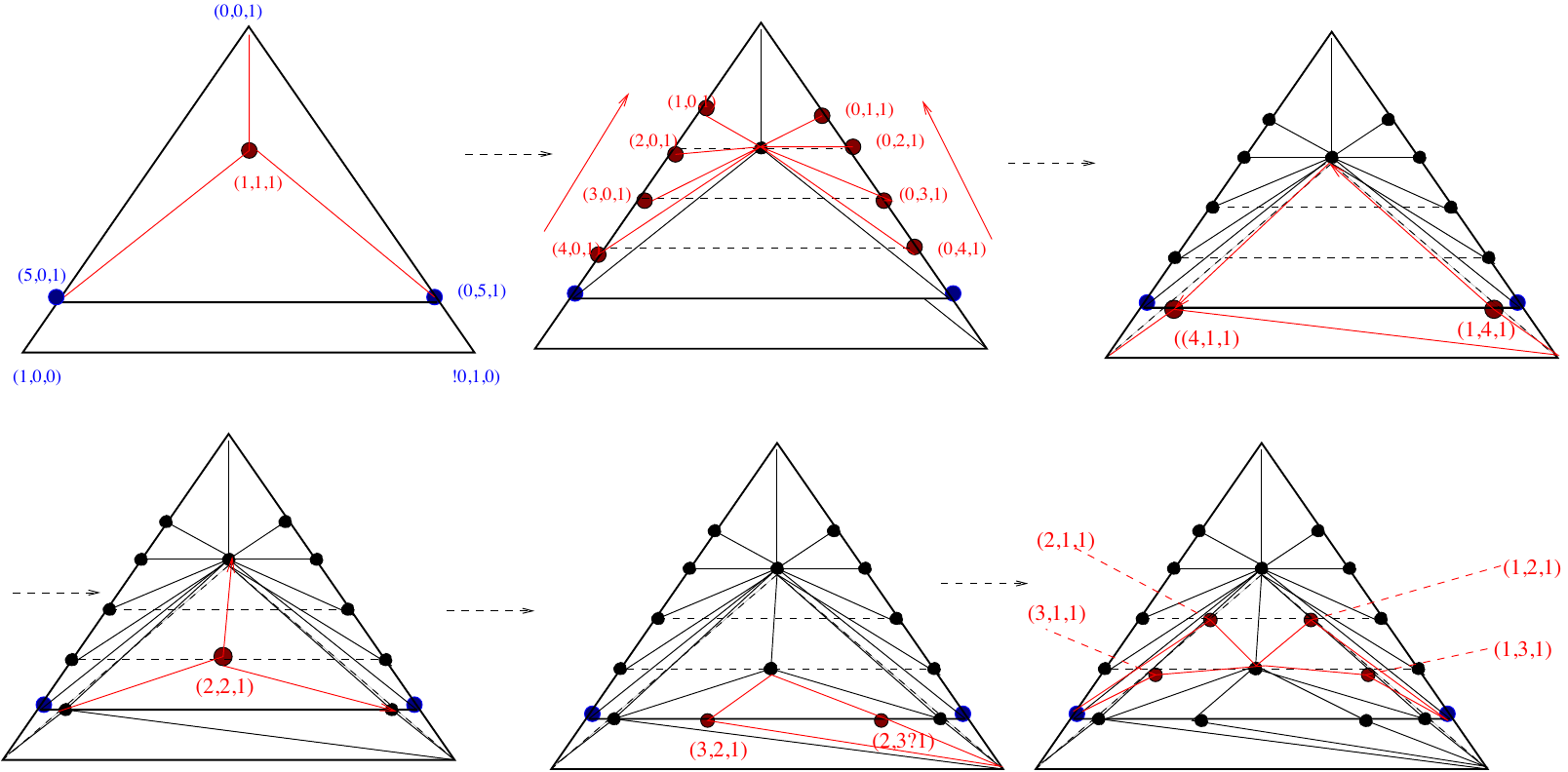}

\end{center}
\caption{ Resolution of $A_4$ step by step}
\end{figure}

%\begin{figure}[h]
%\begin{center}
%\setlength{\unitlength}{0.50cm}

%\includegraphics[scale=0.4]{}
%\end{center}
%\caption{ Toric resolution of $A_4$}
%\end{figure}

\newpage

{\bf CLAIM : This is  a toric resolution for $A_{2n}$.}\\

\begin{proof}
\begin{itemize}
\item Let us look at triangles $[(1,1,1),(k,0,1),(k+1,0,1)]$ and $[(1,1,1),(0,k,1),(0,k+1,1)]$ for  $1\le k\le n,$  \\
we have:
$$\left \vert \begin{array}{ccc}
\  1&0&0\\
\ 1&k&k+1\\
\ 1&1&1\\
\end{array}\right \vert= -1\ \ \ 
and \ \ \ \left \vert \begin{array}{ccc}
\  1&k&k+1\\
\ 1&0&0\\
\ 1&1&1\\
\end{array}\right \vert= 1$$
\item We also have the triangles  $[(1,0,1),(1,1,1),(0,0,1)]$ et $(0,1,1),(1,1,1),(0,0,1)]$ of determinant $1$.
\item Now, we look at  $[(k,n-k,1),(k-1,n-(k-1),1),(0,1,0)]$.\\
We have
$$\left \vert \begin{array}{ccc}
\  k&k-1&0\\
\ n-k&n-(k-1)&1\\
\ 1&1&0\\
\end{array}\right \vert= 1$$

If $n$ is even, we have the triangles t $[(\lceil \frac{n+1}{2}\rceil -1, \lceil \frac{n+1}{2}\rceil,1),(\lceil \frac{n+1}{2}\rceil ,\lceil \frac{n+1}{2}\rceil +1,1),(0,1,0)]$ and if $n$ is odd, we have the triangles $[(\lceil \frac{n+1}{2}\rceil +1, \frac{n+1}{2},1),(\frac{n+1}{2},\frac{n+1}{2},1),(0,1,0)]$ et $[( \frac{n+1}{2}, \frac{n+1}{2}-1,1),(\frac{n+1}{2},\frac{n+1}{2},1),(0,1,0)]$ with determinant $1$.
\item The cones $[e_1,(n+1,0,1),(n,1,1)]$ and $[e_1,e_2,(n,1,1)]$ are non singular.
\item Finally we have to look at the triangles $[(k+l,k,1),(k+l-1,k,1),(k+1,k+1,1)]$ and $[(k+l,k,1),(k+l-1,k,1),(n+2-k,k-1,1)]$ for all $1\le k\le \lceil \frac{n+1}{2}\rceil$ and $k\le l \le n-1$(and to the symmetrical one too).\\
One has 

$$\left \vert \begin{array}{ccc}
\  k+l&k+l-1&k+1\\
\ k&k&1\\
\ 1&1&1\\
\end{array}\right \vert= 1    \ \ \ and\ \ \  \left \vert \begin{array}{ccc}
\  k+l&k+l-1&n+2-k\\
\ k&k&k-1\\
\ 1&1&1\\
\end{array}\right \vert= 1$$

\end{itemize}
\end{proof}

Now we have to prove that in fact this resolution is minimal or a $G-resolution$ (according to C. Bouvier and G. Gonzalez-Sprinberg).\\
First of all, the weight vectors $(l,m-l+1,1), l=1,\ldots,m~~\mbox{and}~~m=1,\ldots, n$ are clearly primitive as their third component is $1.$\\
Consider the simplicial cone generated by $e_3,(n+1,0,1),(0,n+1,1)$;  all the weight vectors belong to this cone.\\
Let us give the same names to the points in $\R^3$ which are the end of the vectors starting from $0$ and the vectors themselves.
\begin{lem}
The points in $\R^3$ $\{e_3,(n+1,0,1),(0,n+1,1),(l,m-l+1,1), l=1,\ldots,m~~\mbox{and}~~m=1,\ldots, n\}$ belong to the face $(e_3,(n+1,0,1),(0,n+1,1))$ of the thetahedron $[0,e_3,(n+1,0,1),(0,n+1,1)]$. This implies that
 the set $\{e_3,(n+1,0,1),(0,n+1,1),(l,m-l+1,1), l=1,\ldots,m~~\mbox{and}~~m=1,\ldots, n\}$ is free over $\Z$, i.e. none of the vectors is reducible into other vectors of this set.\end{lem}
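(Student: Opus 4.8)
The plan is to exhibit a single linear form that is strictly positive on the relevant cone and that takes its minimal positive value, namely $1$, simultaneously on all the listed vectors; irreducibility then follows purely for grading reasons, and the geometric statement about the face is what pins down this value.

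First I would prove the membership in the face. Every vector in the list has third coordinate equal to $1$: this is immediate for $e_3=(0,0,1)$, for $(n+1,0,1)$ and $(0,n+1,1)$, and for each $(l,m-l+1,1)$. Hence all of them lie in the affine plane $\{z=1\}$. The triangular face $(e_3,(n+1,0,1),(0,n+1,1))$ is the convex hull of its three vertices, which also lie in $\{z=1\}$; projecting onto the first two coordinates it is the triangle with vertices $(0,0)$, $(n+1,0)$, $(0,n+1)$. A point $(l,m-l+1)$ belongs to this triangle exactly when $l\ge 0$, $m-l+1\ge 0$ and $l+(m-l+1)\le n+1$. Using the ranges $1\le l\le m$ and $1\le m\le n$ one checks $l\ge 1>0$, $m-l+1\ge 1>0$, and $l+(m-l+1)=m+1\le n+1$, so each listed vector does lie on the face.

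Next I would set up the grading. Let $\sigma=\langle e_3,(n+1,0,1),(0,n+1,1)\rangle$ and consider the linear form $L(x,y,z)=z$. Writing a point of $\sigma$ as $\lambda_0 e_3+\lambda_1(n+1,0,1)+\lambda_2(0,n+1,1)$ with $\lambda_i\ge 0$, its $z$-coordinate equals $\lambda_0+\lambda_1+\lambda_2$, which vanishes only when all $\lambda_i=0$; thus $L>0$ on $\sigma\setminus\{0\}$. On lattice points $L$ takes non-negative integer values, so its minimal positive value on $\sigma\cap N\setminus\{0\}$ is $1$, attained for instance at $e_3$, and the slice $\{L=1\}$ is precisely the face identified above. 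Finally, to deduce irreducibility, suppose a listed vector $v$ (so $L(v)=1$) decomposed as $v=n_1+n_2$ with $n_1,n_2\in\sigma\cap N\setminus\{0\}$; then $1=L(v)=L(n_1)+L(n_2)\ge 1+1=2$, a contradiction. Hence no such decomposition exists, so each listed vector is irreducible in the sense of the definition of $G_\sigma$, and in particular none of them is a sum of other lattice points of the set, which is the asserted ``freeness''.

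The argument is essentially forced once the grading $L=z$ is identified, so I expect no genuine obstacle. The only steps requiring care are the convexity check placing every vector on the face $\{z=1\}$ via the index ranges, and the observation that $\{z=1\}$ is the minimal positive level of a form positive on $\sigma$; it is this last observation that converts the geometric statement ``lying on the face'' into the algebraic statement ``irreducible''.
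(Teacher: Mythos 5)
Your proof is correct and takes essentially the same route as the paper's: the paper's own proof just observes that the face lies in the hyperplane $z-1=0$ and that all the listed points satisfy $z=1$, leaving the deduction of freeness implicit. You fill in precisely what the paper omits — the convexity check that the points $(l,m-l+1,1)$ lie inside the triangle, and the grading argument with the form $L(x,y,z)=z$ — and in doing so you actually prove the slightly stronger fact that each listed vector is irreducible in $\sigma\cap N$ (i.e.\ belongs to $G_\sigma$), which is what the paper's subsequent minimality argument really uses.
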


Suppose {that} the lemma is {proved.} {Then} the only primitive vectors that could generate the characteristic vectors should be vectors inside the {tetrahedron} $[0,e_3,(n+1,0,1),(0,n+1,1)]$.  Suppose that there is such a vector. {By}  th. 1.10 of \cite{BGS}, it corresponds to an essential divisor in the toric resolution of the cone $[e_3,(n+1,0,1),(0,n+1,1)]$, { i.e.,} it appears in every regular subdivision as an extremal vector. But  the vectors  
 $(l,m-l+1,1), l=1,\ldots,m~~\mbox{and}~~m=1,\ldots, n\}$, we get such a regular subdivision, so there is no primitive vector in the tetrahedron $[0,e_3,(n+1,0,1),(0,n+1,1)]$.\\\\
Let us prove the lemma:
 \begin{proof} 
 The equation of the face $(e_3,(n+1,0,1),(0,n+1,1))$ is $z-1=0$. Clearly all the points $\{e_3,(n+1,0,1),(0,n+1,1),(k,n-k,1),(k-1,n-(k-1),1),(0,1,0),(l,m-l+1,1), l=1,\ldots,m~~\mbox{and}~~m=1,\ldots, n\}$ belongs to it.
  \end{proof}

\subsection{The $D_n$ singularities}
\begin{thm}
{The w}eight vectors of $D_{n}$ give a "\it{canonical}"  toric minimal embedded resolution of the singularity. In particular the set of essential components $EC(D_{n})$ is in bijection with the divisors which appear on every toric minimal embedded resolution of these singularities. 
\end{thm}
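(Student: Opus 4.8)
The plan is to run, for the $D_n$ family, the same five-step scheme that worked for $A_n$; the new feature is that the Newton polyhedron of $D_{2n}=\{f=z^2-xy^2-x^{2n-1}=0\}$ has three compact vertices $P_1=(0,0,2)$, $P_2=(1,2,0)$, $P_3=(2n-1,0,0)$, so its dual fan $\Gamma(f)$ has three maximal cones $\sigma_1,\sigma_2,\sigma_3$, dual respectively to $P_1,P_2,P_3$, all sharing the ray $R_0=(2,2n-2,2n-1)$ dual to the compact triangle $P_1P_2P_3$. First I would compute these cones explicitly, obtaining $\sigma_1=\langle e_1,e_2,V_0,R_0\rangle$ (non-simplicial), $\sigma_2=\langle e_3,V_0,R_0\rangle$ (multiplicity $4n-4$) and $\sigma_3=\langle e_2,e_3,R_0\rangle$ (multiplicity $2$), where $V_0=(2,0,1)$. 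Then I would locate the vectors of $EE(D_{2n})$ in this picture: the $U_k=(1,k,k)$ lie in the interior of $\sigma_1$, the $W_k=(1,k,k+1)$ with $k\le n-2$ together with $W_0$ lie in $\sigma_2$, the $V_k=(2,k,k+1)$ lie along the wall $\sigma_1\cap\sigma_2$ (the plane $a+2b=2c$), and $W_{n-1}=(1,n-1,n)=\tfrac12(e_3+R_0)$ sits on the wall $\sigma_2\cap\sigma_3$. Unlike $A_n$, there is no $x\leftrightarrow y$ symmetry to halve the work.

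Next I would build the regular subdivision $\Sigma$ cone by cone, just as in the $A_n$ algorithm. On $\sigma_3$ inserting $W_{n-1}$ alone already splits it into two unimodular cones; on $\sigma_2$ inserting the chain $W_0,\dots,W_{n-1}$ together with the $V_k$ lying on its walls produces a fan-type triangulation; and on $\sigma_1$ inserting the $U_k$ together with the $V_k$ triangulates the non-simplicial cone. Regularity of $\Sigma$ is then the routine part: it reduces to checking that the representative determinants, such as $\det(e_2,e_3,W_{n-1})=1$, $\det(e_2,W_{n-1},R_0)=1$ and $\det(V_k,V_{k+1},U_\ell)=\pm1$, all have absolute value $1$, which I would record in a short case list in the style of the $A_4$ and $A_{2n}$ computations.

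The crux, and the step that genuinely differs from $A_n$, is the minimality lemma. In $A_n$ every weight vector lay on the single face $z=1$, so irreducibility was immediate; here the vectors have different last coordinates and are spread over three cones, so a single functional no longer suffices and I would use one functional per cone. For the level-one vectors $U_k,W_k,W_0,W_{n-1}$, all with first coordinate $1$, I would use $\phi(a,b,c)=a$: within $\sigma_1$ (resp. $\sigma_2$) the only lattice points with $a=0$ are the nonnegative multiples of $e_2$ (resp. $e_3$), because the face $\{a=0\}$ is a single ray; hence in any splitting $U_k=n_1+n_2$ one summand would be a positive multiple of $e_2$, and subtracting it makes the defining inequality $2c\le a+2b$ of $\sigma_1$ (resp. $a+2b\le 2c$ of $\sigma_2$) fail, so no nontrivial decomposition exists and $U_k$ (resp. $W_k$) is irreducible. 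For the level-two wall vectors $V_k$ I would pass to the rank-two lattice of the plane $a+2b=2c$: every lattice point of the wall has even first coordinate, the layer $a=2$ is exactly $\{V_k\}$, and every point of the layer $a=4$ splits as $V_i+V_j$, so the $V_k$ are precisely the irreducible elements of the wall. The apparent relation $V_k=U_i+W_{k-i}$ does not threaten this, since $U_i\notin\sigma_2$ and $W_{k-i}\notin\sigma_1$, so it is never a decomposition inside one cone. This collection of per-cone arguments is the analogue of the $A_n$ lemma and is where I expect the real work to lie.

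Finally I would conclude as for $A_n$. Since every weight vector is irreducible, hence lies in $G_{\sigma_i}$ for the maximal cone $\sigma_i$ containing it, and since $\Sigma$ is a regular subdivision whose rays are exactly these vectors together with the rays of $\Gamma(f)$, Theorem \ref{BG} forces minimality: any further irreducible vector would have to appear as an extremal ray of the regular subdivision $\Sigma$, which it does not, so $\bigcup_i G_{\sigma_i}$ is exhausted by the vectors we used and $\Sigma$ is a $G$-resolution, that is, a minimal toric embedded resolution in the sense of Section 3. The stated bijection then comes from the map $\mathcal{C}_m\mapsto \R_+\,v(\mathcal{C}_m)$: by the Proposition its image is the set of rays $\{U_k,W_k,V_k,W_0,V_0\}$, and by the previous step these are precisely the rays of $\Sigma$ carrying the exceptional divisors of $\mu_\Sigma$, so comparing the explicit lists of Lemma \ref{ec} and the Proposition identifies the essential components of the jet schemes with the embedded essential divisors. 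The odd case $D_{2n+1}$ is handled identically, now with $R_0=(2,2n-1,2n)$ and the corresponding weight vectors, the only delicate point again being the parity of the first coordinate along the wall $\sigma_1\cap\sigma_2$.
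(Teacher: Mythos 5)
Your proposal is correct, and its skeleton is the same as the paper's: place the weight vectors $U_i,W_i,V_i,W_0,V_0$ in the Newton dual fan, insert them cone by cone, certify regularity of the resulting subdivision by determinant computations, and conclude minimality by combining irreducibility of these vectors with Theorem \ref{BG}, so that the subdivision is a $G$-resolution and any further candidate essential ray is excluded because it does not appear in this particular regular subdivision. The genuine difference is in the irreducibility lemma. The paper restores the exact $A_n$ situation by choosing, for each maximal cone, an affine functional equal to $1$ on all of its generators and on the weight vectors it contains: $z-y$ equals $1$ on $e_3$, $(2,0,1)$, $R_0=(2,2n-2,2n-1)$ and on every $W_i$ and $V_i$, while $x+y-z$ equals $1$ on $e_1$, $e_2$, $(2,0,1)$, $R_0$ and on every $U_i$ and $V_i$; since such a functional is nonnegative on the cone and vanishes only at the origin, any lattice point at level $1$ is automatically irreducible, and all families are dispatched by one uniform argument. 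Your functional $a=x$ is not constant on the generators, which is why you need the extra analysis of the face $\{a=0\}$ plus a defining-inequality contradiction for the level-one vectors, and a separate rank-two parity argument for the $V_k$; this works, but you should state the fact it silently uses, namely that the wall $\langle(2,0,1),R_0\rangle$ is a face of $\sigma_1$ and of $\sigma_2$ (it is the zero locus of $a+2b-2c$, which is nonnegative on $\sigma_1$ and nonpositive on $\sigma_2$), so that any decomposition of $V_k$ inside either maximal cone is forced to stay in the wall; without this remark, irreducibility in the wall lattice does not formally give $V_k\in G_{\sigma_1}$, which is what the definition of toric embedded-essential divisor requires. On the other hand, your write-up is more careful than the paper's at two spots: you subdivide $\sigma_3=\langle e_2,e_3,R_0\rangle$ explicitly through $W_{n-1}=\frac{1}{2}(e_3+R_0)$, whereas in the paper this cone is never treated in the text and the displayed determinant $2n-2-2i$ of the $W$-chain is $0$, not $1$, at $i=n-1$, precisely because $W_{n-1}$ lies on that wall; and you at least flag the odd case $D_{2n+1}$, which the paper omits entirely ("Let us do it for $D_{2n}$").
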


\begin{proof}
Let us do it for $D_{2n}$.\\
Let us recall what are the Newton polyhedron for $D_{2n}$ and the associated  dual cone:

\begin{figure}[h]
\begin{center}
\setlength{\unitlength}{0.50cm}

\includegraphics[scale=0.4]{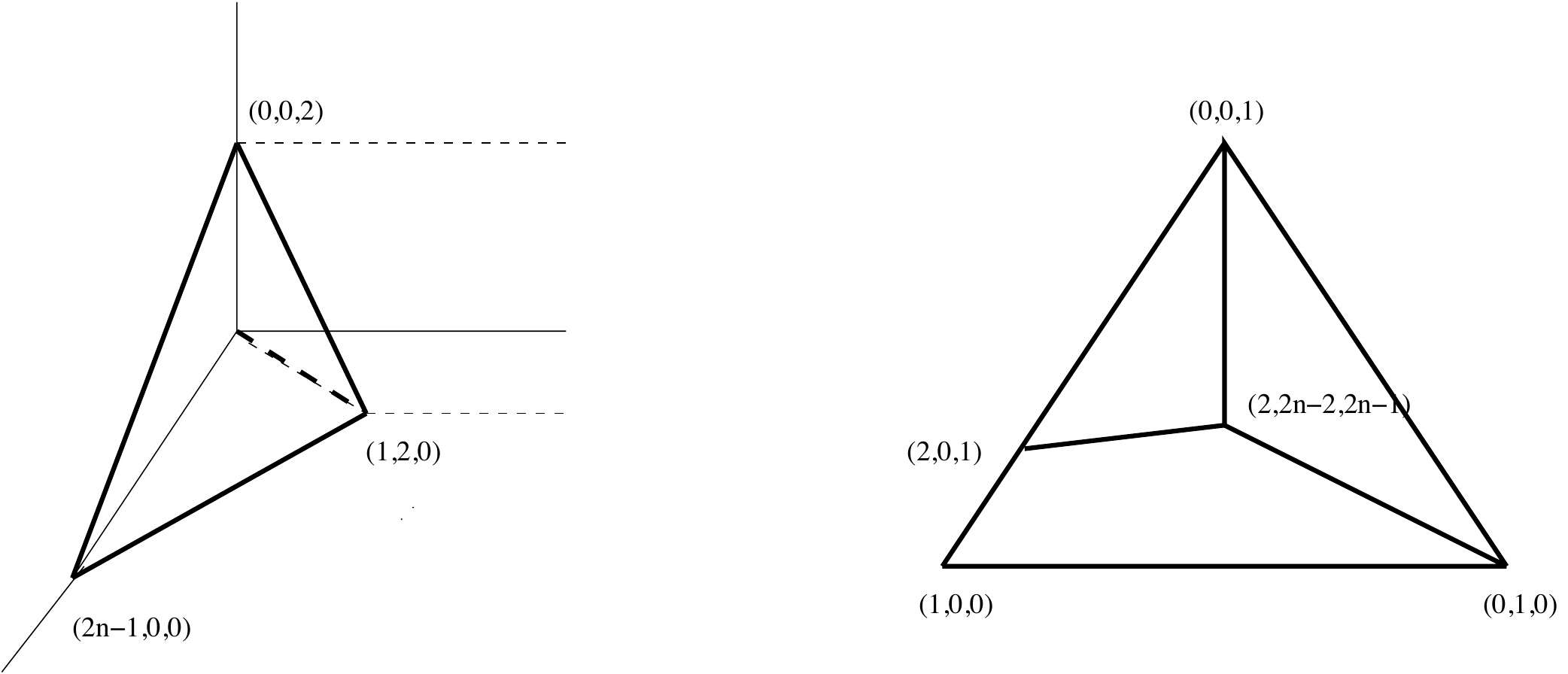}
\end{center}
\caption{ Newton polyhedron of $D_{2n}$ and its dual cone}
\end{figure}

First we make the dual polyhedron simplicial by adding the edge $[e_1,(2,2n-2,2n-1]$. Let us call $\mathcal{C}_1$ be the cone $[e_1,(2,0,1), (2,2n-2,2n-1)]$ , $\mathcal{C}_2$ be   the cone $[e_1,e_2, (2,2n-2,2n-1)],$ $\mathcal{C}_3$ be the cone $[e_3,(2,0,1), (2,2n-2,2n-1)]$ , $\mathcal{C}_4$ be the cone $[e_2,e_3, (2,2n-2,2n-1)]$ .\\

First of all, to obtain a toric resolution of singularities of $D_{2n}$ with equation $z^2+x(y^2+x^{2n-2})=0$, as it is not commode, one has to blow up the $y$ axes, i.e. to add the vector $(1,0,1)$.\\
Let us now consider the weight vectors and show some of their properties:
\begin{itemize}
\item The vectors $U_i=(1,i,i)$ ($1\le i \le n-1$) are {primitive} and belong to the cone $\mathcal{C}_2$. More precisely they lie onto the line $(e_1,(0,1,1))$:\\
In fact we have the following equalities:
$$U_i=\frac{2n-2i-1}{2n-1}e_1+\frac{i}{2n-1}e_2+\frac{i}{2n-1}(2,2n-2,2n-1)$$
and if we note $U_0=(1,0,0)$  , then for $1\le i \le n-1$, $$U_i=U_{i-1}+(0,1,1)$$
\item  The vectors $W_i=(1,i,i+1)$ ($1\le i \le n-1$) are {primitive} and belong to the cone $\mathcal{C}_3$. More precisely they lie onto the line $((1,0,1),(0,1,1))$:\\
In fact we have the following equalities:
$$W_i=\frac{n-1}{2n-2}e_3+\frac{n-i-1}{2n-2}(2,0,1)+\frac{i}{2n-2}(2,2n-2,2n-1)$$
 and  if we note $W_0=(1,0,1)$  , then for $1\le i \le n-1$, $W_i=W_{i-1}+(0,1,1)$
\item  The vectors $V_i=(2,i,i+1)$ ($1\le i \le 2n-2$) are primitive and belong to the cones $\mathcal{C}_1$ and $\mathcal{C}_3$. More precisely they lie onto the line $((2,0,1),(0,1,1))$:\\
In fact we have the following equalities:
$$V_i=\frac{2n-2-i}{2n-2}(2,0,1)+\frac{i}{2n-2}(2,2n-2,2n-1)$$
 and, if we note $V_0=(2,0,1)$  , then for $1\le i < 2n-2$,  $V_i=V_{i-1}+(0,1,1).$
\end{itemize}

{\bf CLAIM: the simplicial decomposition of the dual Newton cone of $D_{2n}$ obtained by adding, the edge $[e_1,(2,2n-2,2n-1]$, the vector $(1,0,1)$, then the vectors $U_1$ to $U_{n-1}$ in the natural order, { the} vectors $W_1$ to $W_{n-1}$ in the natural order and the vectors $V_1$ to $V_{2n-2}$ in the natural order too, give a toric minimal embedded resolution of $D_{2n}$}.\\
The resolved dual cone is of the form:
\begin{figure}[h]
\setlength{\unitlength}{0.40cm}
\begin{center}
\includegraphics[scale=0.55]{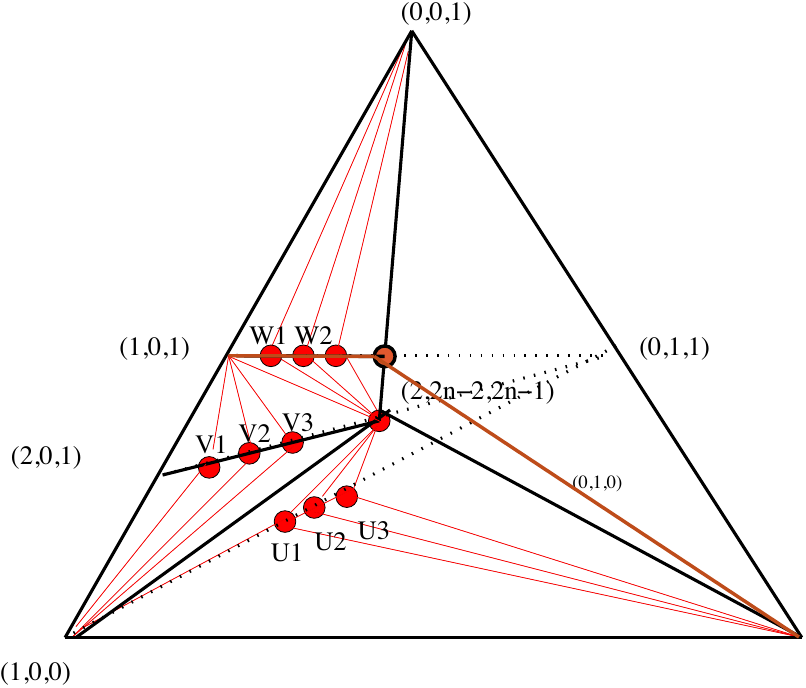}
\caption{Resolution of $D_{2n}$}
\end{center}
\end{figure}

Proof of the claim: by the previous observations, the  {vectors $U_i,\ V_i,\ W_i$ above} are primitive; thus it remains to prove that the cones obtained are non singular, i.e. they have determinant $+1$ or $-1$ and using the same proof as for $A_n$, we prove then that the vectors are irreducible. \\\\
We add first the vector $(1,0,1)$, which corresponds to the blowing up of the $y$-axes. Then, the cone $\mathcal{C}_3$ is decomposed into two cones :  $\mathcal{C}_{31}=((1,0,1),(2,0,1),(2,2n-2,2n-1))$ and  $\mathcal{C}_{32}=((1,0,1),e_3,(2,2n-2,2n-1))$ . By the above observation, we have now that each group of characteristic vectors are in different cones. 

  \begin{figure}[h]
\setlength{\unitlength}{0.40cm}
\begin{center}
\includegraphics[scale=0.4]{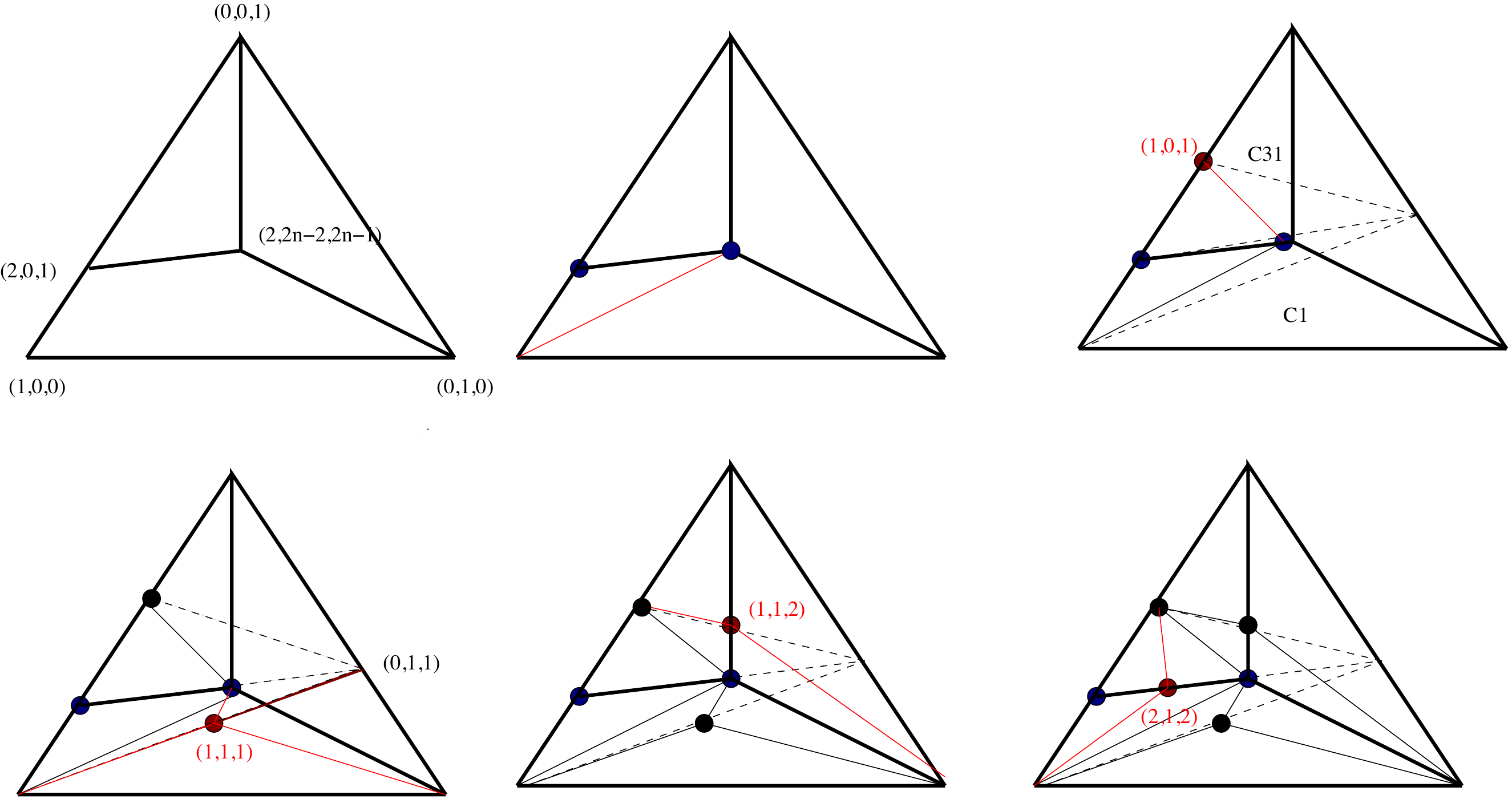}
\caption{Resolution of $D_{4}$ step by step}
\end{center}
\end{figure}

\begin{itemize}
\item Decomposition of the cone $\mathcal{C}_1$:\\
Let us note $U_0=e_1$.\\
We add the vectors from $U_1$ to $U_{n-1}$ and show by induction that we obtain at the end regular cones. At each step the cone where lie the new $A_i$ is decomposed into three cones: $(U_i, U_{i-1},e_2)$, $((U_i, U_{i-1},(2,2n-2,2n-1))$ and $(U_i,e_2,(2,2n-2,2n-1)$. That last one contains $U_{i+1}$ .
$$\left \vert \begin{array}{ccc}
\  1&1&0\\
\ i&i-1&1\\
\ i&i-1&0\\
\end{array}\right \vert =1,\  \left \vert \begin{array}{ccc}
\  1&1&2\\
\ i&i-1&2n-2\\
\ i&i-1&2n-1\\
\end{array}\right \vert = -1,\left \vert \begin{array}{ccc}
\  1&2&0\\
\ i&2n-2&1\\
\ i&2n-1&0\\
\end{array}\right \vert = n-1-2i$$

That last determinant is not equal to  $+1$ or $-1$, so we add $U_{i+1}$ and so on.\\
 For $i=n-1$ the last determinant is equal to one.

\item Decomposition of the cone $\mathcal{C}_{31}$:\\
Let us note $W_0=(1,0,1)$.\\
We add the vectors from $W_1$ to $W_{n-1}$ and show by induction that we obtain at the end regular cones. At each step the cone where lie the new $W_i$ is decomposed into three cones: $(W_i, W_{i-1},e_3)$, $((W_i, W_{i-1},(2,2n-2,2n-1))$ and $(W_i,e_3,(2,2n-2,2n-1)$. That last one contains $W_{i+k}$  for $k\ge 1$.\\
We have :
$$\left \vert \begin{array}{ccc}
\  1&1&0\\
\ i&i-1&0\\
\ i+1&i&1\\
\end{array}\right \vert = -1,\ \left \vert \begin{array}{ccc}
\  1&1&2\\
\ i&i-1&2n-2\\
\ i+1&i&2n-1\\
\end{array}\right \vert =1,\ \left \vert \begin{array}{ccc}
\  1&2&0\\
\ i&2n-2&0\\
\ i+1&2n-1&1\\
\end{array}\right \vert= 2n-2-2i$$
That last determinant is not equal to  $+1$ or $-1,$ so we add $A_{i+1}$ and so on.\\
 For $i=n-1$ the last determinant is equal to one.

\item Decomposition of the cones $\mathcal{C}_{32}$ and $\mathcal{C}_2$:\\
Let us note $V_0=(2,0,1)$.\\
We add the vectors from $V_1$ to $V_{2n-2}$ and show by induction that we obtain at the end regular cones. At each step the two cones where lie the new $C_i$ are decomposed into two cones themselves: $(V_i, V_{i-1},e_1)$, $(V_i, V_{i-1},(1,0,1))$ and $(V_i,e_3,(2,2n-2,2n-1)$, $(V_i,(1,0,1),(2,2n-2,2n-1)$. Those last ones contain $V_{i+1}$  and are not regular.
$$\left \vert \begin{array}{ccc}
\  2&2&1\\
\ i&i-1&0\\
\ i+1&i&0\\
\end{array}\right \vert = 1$$

$$\left \vert \begin{array}{ccc}
\  2&2&1\\
\ i&i-1&0\\
\ i+1&i&1\\
\end{array}\right \vert= \left \vert \begin{array}{ccc}
\  2&2&1\\
\ i&2n-2&0\\
\ i+1&2n-1&0\\
\end{array}\right \vert= 2ni-i-2ni+2i-2n+2=i-2n+2$$

and

$$\left \vert \begin{array}{ccc}
\  2&2&1\\
\ i&2n-2&0\\
\ i+1&2n-1&1\\
\end{array}\right \vert= 2n-2-i$$

For $i=2n-3$ the two last determinants are equal to  $+1$ or $-1.$
\end{itemize}

\bigskip

Moreover we have obtained a $G$-resolution:\\
As for $A_n$ singularity,  one can show that $W_i$ (resp.$U_i$ and $V_i$) belongs to the base of the cone $(e_3,(2,0,1),(2,2n-2,2n-1))$ (resp. $(e_1,(2,0,1)(2,2n-2,2n-1))\cap(e_3,(2,0,1),(2,2n-2,2n-1))$, and $(e_1,e_2,(2,2n-2,2n-1))$ , whose equation is $y-z+1=0$ (resp. $y-z+1=0; x+y-z-1=0$ and $x+y-z-1=0$). With these vectors we obtain a regular subdivision of each cone, which implies that there are no other primitive vectors inside each cone and that the characteristic vectors are irreducible. So we also obtain a $G$-resolution for $D_{2n}$.
\end{proof}

Remark: the fact that each vector "belongs" to the base of the cone is equivalent to {saying} that the volume of the subdivisions stay the same during the process of the resolution.

%\begin{figure}[h]
%\includegraphics[scale=0.6]{}
%\caption{Changes of volume after adding a vector}
%\end{figure}

\subsection{The $E_6$ singularity, {according to} \cite{Mo4}} 

Now consider the singularity $E_6$ given by the equation $z^2+y^3+x^4=0$. A  simplicial dual Newton polyhedron associated to it is given on figure $6$.

\begin{thm}
The set of  weight vectors $EE(E_6)$ is $$(1,1,1),(1,1,2),(1,2,2),(2,2,3),(2,3,4),(3,4,6)$$ 
Call them in the lexicographic order.\\
{The w}eight vectors of $E_6$ give a "\it{canonical}"  toric minimal embedded resolution of the singularity. In particular the set of essential components $EC(E_6)$ is in bijection with the divisors which appear on every toric minimal embedded resolution of these singularities. 

%With the vector $(1,1,0)$ (that we call $m$),  one obtains the minimal resolution of the singularity $E_6$.
\end{thm}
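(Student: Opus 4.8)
The plan is to follow the same three-step strategy used above for $A_n$ and $D_{2n}$: produce an explicit regular subdivision $\Sigma$ of $\Gamma(f)$ whose only rays are the rays of $\Gamma(f)$ together with the weight vectors of $EC(E_6)$, prove that $\Sigma$ is a $G$-resolution, and then read off the bijection from the Proposition.

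First I would make $\Gamma(f)$ explicit. The Newton boundary of $z^2+y^3+x^4$ is the single compact triangle with vertices $(4,0,0),(0,3,0),(0,0,2)$, supported by the covector $w=(3,4,6)$, so $\Gamma(f)$ is already simplicial: it consists of the three maximal cones $\sigma_1=[e_2,e_3,w]$, $\sigma_2=[e_1,e_3,w]$, $\sigma_3=[e_1,e_2,w]$, sharing the ray $w$ and tiling the first octant, with $|\det|=3,4,6$ respectively. I would then locate the remaining five weight vectors: $(1,1,1)$ and $(1,1,2)$ lie in the interiors of $\sigma_3$ and $\sigma_2$, while $(1,2,2),(2,3,4)$ lie on the common face $[e_2,w]$ and $(2,2,3)$ on the common face $[e_1,w]$. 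On these faces they are evenly spaced, namely $(1,2,2)=e_2+(1,1,2)$, $(2,3,4)=e_2+2(1,1,2)$, $w=e_2+3(1,1,2)$ and $(2,2,3)=e_1+(1,2,3)$, $w=e_1+2(1,2,3)$, so they cut $[e_2,w]$ and $[e_1,w]$ into regular two-dimensional cones.

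Next I would assemble $\Sigma$ by subdividing each face of the $\sigma_i$ by the weight vectors on it and star-subdividing from the interior weight vector ($(1,1,1)$ for $\sigma_3$, $(1,1,2)$ for $\sigma_2$); the cone $\sigma_1$ carries no interior weight vector and is swept out simply by coning the subdivided face $[e_2,w]$ with $e_3$. Checking that $\Sigma$ is regular is then a finite list of $3\times 3$ determinant computations, all giving $\pm1$ (for instance $\det[(1,1,1),(2,3,4),(3,4,6)]=1$ and $\det[(1,1,2),(2,2,3),(3,4,6)]=1$); by Theorem \ref{NND} this makes $\mu_\Sigma$ an embedded resolution of $X\subset\Cc^3$. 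For minimality, the key observation is that each $\sigma_i$ admits an integral grading taking value $1$ on all of its relevant vectors: $\ell_1=-3x+y+z$, $\ell_2=x-2y+z$, $\ell_3=x+y-z$ each equal $1$ on the three generators of the corresponding cone and on every weight vector lying in it (this is the $E_6$ analogue of the base plane $z=1$ for $A_n$ and of $y-z+1=0$, $x+y-z-1=0$ for $D_{2n}$). Since $\ell_i$ is positive on $\sigma_i\setminus\{0\}$ and integer-valued, a lattice point with $\ell_i$-value $1$ cannot be written as a sum of two nonzero lattice points of $\sigma_i$, so each weight vector is irreducible, i.e.\ lies in $G_{\sigma_i}$. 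Conversely, because $\Sigma$ is regular, Theorem \ref{BG} forces every element of $G_{\sigma_i}$ to be a ray of $\Sigma$; as these rays are exactly the generators of $\sigma_i$ and the weight vectors in $\sigma_i$, we get $G_{\sigma_i}$ equal to precisely this set, so $\Sigma$ is a $G$-resolution and hence a minimal toric embedded resolution.

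The bijection then follows at once. The exceptional rays of $\Sigma$ --- every ray except the coordinate rays $e_1,e_2,e_3$ --- are exactly $w,(1,1,1),(1,1,2),(1,2,2),(2,2,3),(2,3,4)$, which by the Proposition are the weight vectors $v(\mathcal{C}_m)$ of the six components of $EC(E_6)$; sending each $\mathcal{C}_m$ to the divisor of the ray $v(\mathcal{C}_m)$ is the required bijection onto the embedded-essential divisors. I expect the main obstacle to be the minimality half rather than the determinant bookkeeping: one must exhibit a regular subdivision that introduces no ray beyond the weight vectors --- the slightly delicate point being that $\sigma_1$ resolves using only vectors on its boundary --- and then invoke Theorem \ref{BG} carefully enough to rule out any further essential vector hidden inside some $\sigma_i$.
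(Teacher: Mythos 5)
Your proposal is correct and takes essentially the same approach as the paper: subdivide each maximal cone of $\Gamma(f)$ by the weight vectors lying in it (the paper adds them ``in the alphabetic order'', you organize this as face subdivisions plus star-subdivisions from the interior vectors $(1,1,1)$ and $(1,1,2)$), check regularity by $3\times 3$ determinants, and deduce minimality from the fact that the weight vectors lie on an integral hyperplane of value $1$ (your forms $\ell_1,\ell_2,\ell_3$ are the exact analogue of the paper's planes $z=1$ for $A_n$ and $y-z+1=0$, $x+y-z-1=0$ for $D_{2n}$) together with Theorem \ref{BG}. Your identification of the dual fan, the placement of the six vectors, the determinant computations and the irreducibility argument all check out, so this fills in precisely the details the paper's proof leaves to the figure and to the analogy with the $A_n$ and $D_{2n}$ cases.
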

\begin{figure}[h]
\setlength{\unitlength}{0.4cm}
\begin{center}
\includegraphics[scale=0.45]{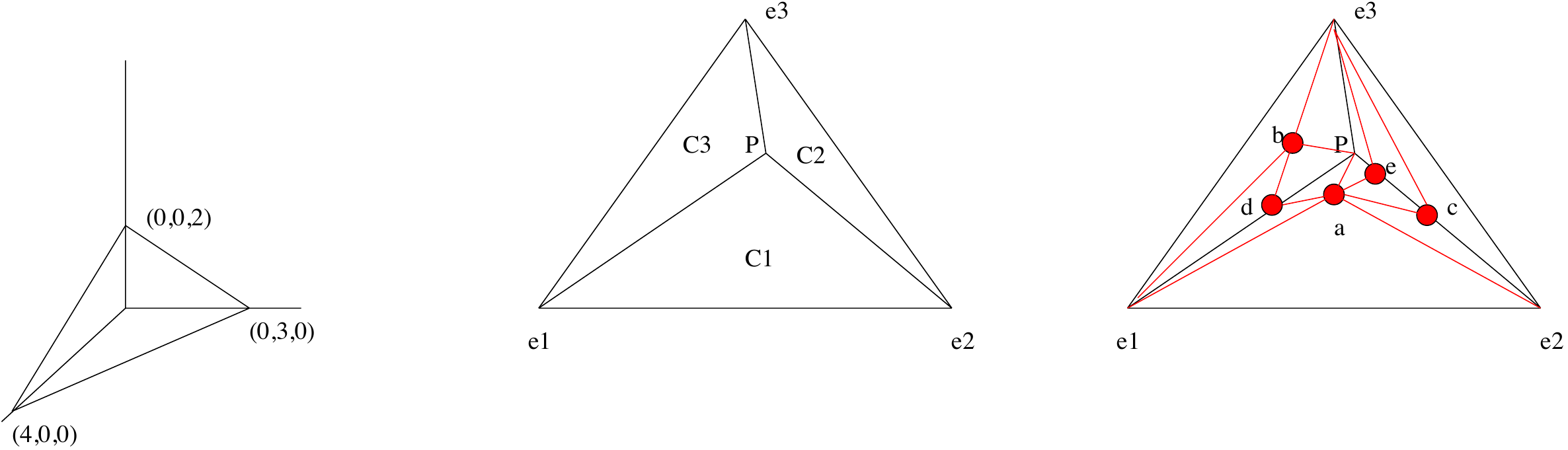}
\end{center}
\caption{ Newton dual polyhedron of $E_6$ and its resolution}
\end{figure}

\begin{proof}
The vectors are in different {subcones} of the dual fan. To make the fan regular, we divide each cone {using} the vectors. So {we have to look first} at the positions of each vector. One has :
$$\{(1,1,1),(1,2,2),(2,2,3),(2,3,4)\} \subset C_ 1\}$$
$$\{(2,3,4),(1,2,2)\} \subset C_ 2\}$$
$$\{(1,1,2),(2,2,3)\} \subset C_ 3\}$$

As for the previous singularities, one can prove that adding the weight vectors  in each cone (in the alphabetic order, see fig. 6), gives a minimal resolution for each of them. See fig. 6 for the picture.

\end{proof}

\subsection{The $E_7$ singularity}

Now consider the singularity $E_7$ given by the equation $x^2+y^3+yz^3=0$. A  simplicial dual Newton polyhedron associated to it is given on figure $7$.

\begin{thm}
The set of weight vectors is $$(1,1,0),(1,2,0)=O_2,(1,1,1),(2,1,1),(2,2,1),(3,2,1),(3,3,1),$$ $$(3,2,2),(4,3,2),(5,3,2),(5,4,2),(6,4,3),(7,5,3),(9,6,4)=O_1$$ Call them in the lexicographic order (except $O_1$ and $O_2$).

{The w}eight vectors of $E_7$ give a "\it{canonical}"  toric minimal embedded resolution of the singularity. In particular the set of essential components $EC(E_7)$ is in bijection with the divisors which appear on every toric minimal embedded resolution of these singularities.

%With the vector $(1,1,0)$ (that we call $m$),  one obtains the minimal resolution of the singularity $E_7$.
\end{thm}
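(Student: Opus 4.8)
The plan is to follow the scheme already used for $A_n$ and $D_{2n}$: describe the dual fan $\Gamma(f)$ of $E_7$ explicitly, classify the $14$ weight vectors according to the maximal cone they sit in, verify that subdividing along them produces only regular cones (hence an embedded resolution by Theorem \ref{NND}), and finally identify these vectors with the minimal generators $G_\sigma$, which gives both minimality and the bijection via Theorem \ref{BG}.

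First I would compute the Newton polyhedron of $f=x^2+y^3+yz^3$: its compact $2$-face is the triangle with vertices $(2,0,0),(0,3,0),(0,1,3)$, dual to the ray $O_1=\rho=(9,6,4)$. The orthant $\R^3_{\geq0}$ is then cut by the three walls $2a=3b$, $2a=b+3c$, $2b=3c$ (all containing $\rho$) into the maximal cones $\sigma_1=\langle e_2,e_3,\rho,O_2\rangle$ (non-simplicial, with $O_2=(1,2,0)$), $\sigma_2=\langle e_1,e_3,\rho\rangle$ and $\sigma_3=\langle e_1,O_2,\rho\rangle$. A direct check places each weight vector in exactly one cone or on one of the walls $\langle e_3,\rho\rangle$, $\langle O_2,\rho\rangle$, $\langle e_1,\rho\rangle$; the vectors on these walls are evenly spaced, with steps $(3,2,1)$, $(2,1,1)$ and $(4,3,2)$ respectively, which is exactly what makes the wall subdivisions of two adjacent cones agree.

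The key point, and the analogue of the plane $z=1$ used for $A_n$, is that each $\sigma_i$ carries a \emph{primitive} integral functional that equals $1$ on all of its generators: $\ell_1=-x+y+z$, $\ell_2=x-2y+z$ and $\ell_3=x-2z$. I would check that every weight vector lying in $\sigma_i$ satisfies $\ell_i=1$, and conversely that the weight vectors of $\sigma_i$ are \emph{exactly} the lattice points of the bounded cross-section $P_i=\sigma_i\cap\{\ell_i=1\}$ (a short enumeration yields $12$, $7$ and $9$ lattice points, matching the listed vectors). Since any nonzero lattice point $v\in\sigma_i$ has $\ell_i(v)\in\Z_{\geq1}$, a level-$1$ point can never be written as a sum of two nonzero lattice points of $\sigma_i$; hence every weight vector is irreducible, and $G_{\sigma_i}$ is precisely the set of lattice points of $P_i$, i.e. the weight vectors contained in $\sigma_i$.

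It then remains to exhibit, for each $i$, a triangulation of the lattice polygon $P_i$ into elementary triangles (drawn in Figure $7$); coning from the origin, and because each $\ell_i$ is primitive, the determinant of a resulting $3$-dimensional cone equals the normalized area of its base triangle in the lattice $N\cap\{\ell_i=1\}$, so regularity is equivalent to the triangulation being unimodular, and the compatibility along the three walls is automatic since the lattice points on each wall segment force its subdivision. By Theorem \ref{NND} this regular subdivision is an embedded resolution; since its rays lying in the interior of each $\sigma_i$ are exactly the elements of $G_{\sigma_i}$ and each such ray is irreducible, Theorem \ref{BG} shows no other essential vector can appear, so it is a $G$-resolution, hence minimal. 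Removing the coordinate rays $e_1,e_2,e_3$ (which yield the coordinate hyperplanes, not exceptional divisors), the remaining elements of $\bigcup_i G_{\sigma_i}$ are precisely the $14$ weight vectors, which by the earlier proposition and the lemma determining $EC(E_7)$ are in bijection with $EC(E_7)$. The main obstacle is purely combinatorial bookkeeping: $\sigma_1$ is not simplicial and must be triangulated first, and one must confirm that no cross-section $P_i$ hides an extra lattice point and that every triangle of the chosen triangulation is unimodular; but once the three functionals $\ell_i$ are found, each verification collapses to a two-dimensional unimodularity check.
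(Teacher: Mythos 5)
Your proposal is correct, and all of its numerical claims check out: the dual fan of $x^2+y^3+yz^3$ does consist of the three maximal cones $\sigma_1=\langle e_2,e_3,\rho,O_2\rangle$, $\sigma_2=\langle e_1,e_3,\rho\rangle$, $\sigma_3=\langle e_1,O_2,\rho\rangle$ with $\rho=(9,6,4)$; the functionals $\ell_1=-x+y+z$, $\ell_2=x-2y+z$, $\ell_3=x-2z$ are primitive and equal $1$ on the corresponding generators and on every weight vector in the corresponding cone; the cross-sections contain exactly $12$, $7$ and $9$ lattice points; and the wall points are evenly spaced with steps $(3,2,1)$, $(2,1,1)$, $(4,3,2)$. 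Your route differs from the paper's in execution, though the skeleton (subdivide $\Gamma(f)$ along the weight vectors, verify regularity, deduce minimality from Theorem \ref{BG}) is the same. The paper first makes the fan simplicial — its four cones $C_1,\dots,C_4$ come from splitting your $\sigma_1$ along the edge joining $e_2$ to $O_1$ — and then inserts the weight vectors one at a time in lexicographic order, checking regularity by $3\times 3$ determinant computations as in the $A_n$ and $D_{2n}$ sections, with irreducibility obtained afterwards from the ``base of the cone'' observation plus Bouvier--Gonzalez-Sprinberg. You instead keep the non-simplicial cone and work entirely in the level-one cross-sections $P_i$: irreducibility becomes a one-line consequence of $\ell_i\geq 1$ on nonzero lattice points, every regularity check collapses to two dimensions (and there it is in fact automatic: by Pick's theorem any triangulation of a lattice polygon into triangles containing no lattice points besides their vertices is unimodular, so once you know $P_i$ hides no extra lattice point, nothing remains to verify), and $G_{\sigma_i}$ is identified outright with the lattice points of $P_i$. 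What the paper's vector-by-vector method buys is an explicit subdivision matching its figures; what yours buys is the conceptual reason why all of the paper's determinants equal $\pm1$, and a verification that is less exposed to enumeration slips. One wording point, not a gap: when you say the weight vectors in $\sigma_i$ are ``exactly'' the lattice points of $P_i$, strictly those lattice points also include the coordinate rays $e_1,e_2,e_3$ (that is how $12,7,9$ are reached); you do remove them correctly at the end, but the earlier sentence should already make that exception.
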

\begin{figure}[h]
\begin{center}
\setlength{\unitlength}{0.50cm}

\includegraphics[scale=0.6]{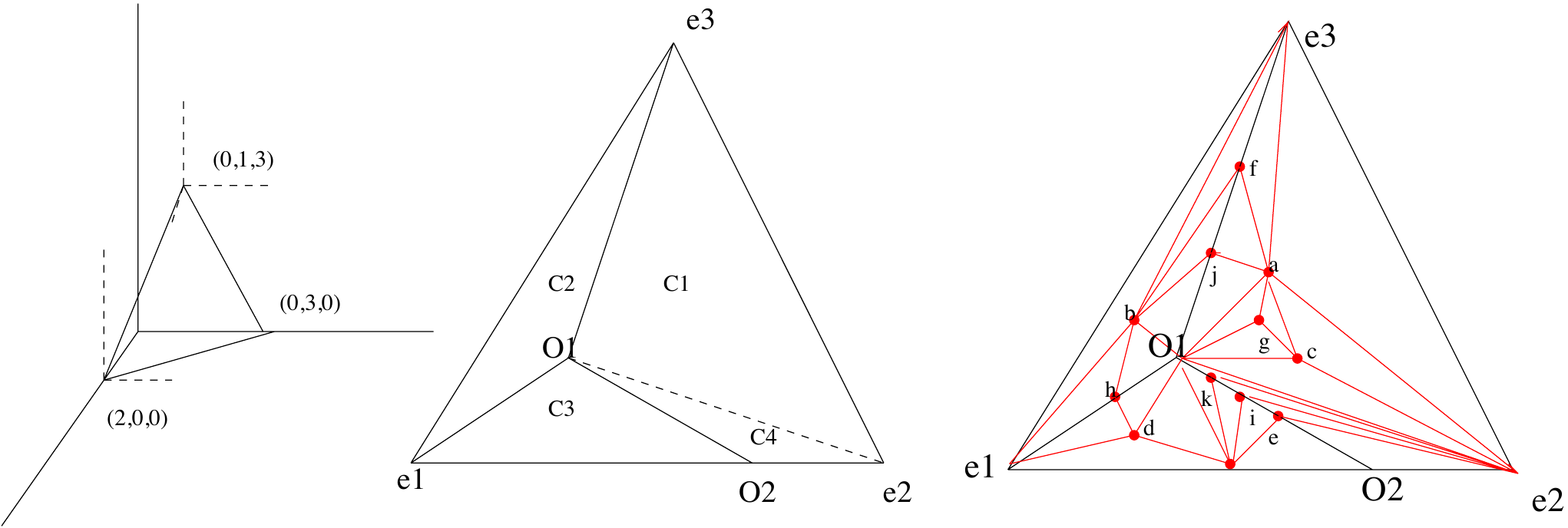}
\end{center}
\caption{ Newton dual polyhedron of $E_7$ and its resolution}
\end{figure}

\begin{proof}
The vectors are in different sub cones of the dual fan (see fig.7 for denomination of the cones). To make the fan regular, we divide each cone thanks to the vectors. So first we have to look at the positions of each vector. One has :
$$\{(1,1,1),(2,2,1),(3,2,2),(4,3,2),(6,4,3)\} \subset C_ 1\}$$
$$\{(2,1,1),(3,2,2),(5,3,2),(6,4,3)\} \subset C_ 2\}$$
$$\{(1,1,0),(3,2,1),(3,3,1),(5,3,2),(5,4,2),(7,5,3)\} \subset C_ 3\}$$
$$\{(3,3,1),(5,4,2),(7,5,3)\} \subset C_4 \}$$

As for the previous singularities, one can prove that adding the previous vectors  in each cone (in the lexicographic order), gives a minimal resolution for each of them. See fig.7 for the picture.

\end{proof}
%\newpage

\subsection{The $E_8$ singularity}

Now consider the singularity $E_8$ given by the equation $z^2+y^3+x^5=0$. A  simplicial dual Newton polyhedron associated to it is given on figure $8$. 

\begin{prop}
The weight vector gives an embedded resolution of $E_8$ but this resolution is not minimal. 
\end{prop}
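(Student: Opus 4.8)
The plan is to follow the same two-stage template used for $A_n$, $D_n$, $E_6$ and $E_7$, but to show that the second stage now fails. First I would record the dual fan. Since the Newton boundary of $z^2+y^3+x^5$ is the single triangle with vertices $(5,0,0),(0,3,0),(0,0,2)$, the fan $\Gamma(f)$ consists of the three maximal cones $\sigma_A=\langle e_2,e_3,w\rangle$, $\sigma_B=\langle e_1,e_3,w\rangle$, $\sigma_C=\langle e_1,e_2,w\rangle$, all sharing the ray spanned by the primitive inner normal $w=(6,10,15)$ (which is itself the last weight vector). I would then locate each of the $21$ weight vectors in one of these cones by solving, for $v=(v_1,v_2,v_3)$, barycentric conditions such as $v\in\sigma_C \Leftrightarrow v_1\ge \tfrac{2}{5}v_3$ and $v_2\ge\tfrac{2}{3}v_3$, noting in passing that several vectors fall on the shared two-dimensional faces $\langle e_1,w\rangle$, $\langle e_2,w\rangle$, $\langle e_3,w\rangle$.

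Second, for the resolution statement I would add the weight vectors as rays and check that the induced simplicial subdivision of each maximal cone is regular, i.e. every maximal cone has determinant $\pm1$. This is exactly the determinant bookkeeping carried out in the previous subsections, organised cone by cone (and, on the shared faces, using that the face points are equally spaced by a primitive vector); once it is done, Theorem \ref{NND} gives that the associated $\mu_\Sigma$ is an embedded resolution.

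The real content is the non-minimality, and here I would exhibit a single superfluous ray. By Theorem \ref{BG} a ray of a regular subdivision is a toric embedded-essential divisor if and only if its primitive generator lies in $G_\sigma$, i.e. is irreducible in $\sigma\cap N$; so it suffices to produce one weight vector that is reducible in its cone. The vector $(6,10,14)$ lies in the interior of $\sigma_C$ and satisfies
$$(6,10,14)=(2,3,4)+(4,7,10),$$
with both summands nonzero lattice points of $\sigma_C$ (indeed both are themselves weight vectors). Hence $(6,10,14)\notin G_{\sigma_C}$, while it does occur as a $1$-dimensional cone of the weight-vector subdivision and is not one of the rays $e_1,e_2,e_3,w$ of $\Gamma(f)$. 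By the definition of a minimal toric embedded resolution this forces the weight-vector resolution to be non-minimal; equivalently, the essential component of the jet scheme carrying the valuation $\nu_{(6,10,14)}$ has no counterpart among the embedded-essential divisors, so the correspondence ceases to be a bijection for $E_8$.

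The main obstacle is precisely this last step. For $A_n,\dots,E_7$ every weight vector was forced to be irreducible by a normalisation (a coordinate equal to $1$, or sitting lowest on a thin two-dimensional face), which is what made those resolutions minimal; for $E_8$ that mechanism still protects the vectors lying on the faces $\langle e_i,w\rangle$ — one must check these remain irreducible — so the failure can only be detected in the interior of the index-$15$ singular cone $\sigma_C$. The delicate point is therefore to analyse enough of $G_{\sigma_C}$ (or at least to locate the offending interior lattice point together with a decomposition genuinely lying in $\sigma_C$) to be sure that a truly reducible vector has been included, rather than one that merely looks reducible while its candidate summands sit in neighbouring cones, as happens for $(2,2,3)=(1,1,1)+(1,1,2)$ where the two summands lie in $\sigma_C$ and $\sigma_B$ respectively and $(2,2,3)$ is in fact irreducible on the face $\langle e_1,w\rangle$.
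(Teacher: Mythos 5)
Your overall strategy coincides with the paper's: verify that the weight vectors induce a regular subdivision of $\Gamma(f)$, then detect non-minimality by exhibiting a weight vector that is reducible in the sense of $G_\sigma$, taking care that both summands lie in the \emph{same} maximal cone (a point on which you are, commendably, more explicit than the paper). The description of the fan and your cone-membership computations are correct. However, the single example on which your whole non-minimality argument rests is exactly the wrong one, and this is a genuine gap. The vector $(6,10,14)$ is not primitive: $(6,10,14)=2\,(3,5,7)$. The $1$-dimensional cone it spans is therefore the \emph{same ray} as the one spanned by the weight vector $(3,5,7)$, and in the paper's definitions ($G_\sigma$ consists of primitive vectors by definition, and minimality is a condition on the $1$-dimensional cones of $\Sigma$, each of which is determined by its primitive generator) the only relevant question is whether that primitive generator lies in $G_{\sigma_C}$. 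It does: $(3,5,7)$ is irreducible in $\sigma_C=\{(a,b,c):\,5a\ge 2c,\ 3b\ge 2c,\ a,b,c\ge 0\}$ — a short case check on the first coordinate of a putative splitting $(3,5,7)=n_1+n_2$ kills both cases $a_1=0$ and $a_1=1$ — and indeed it is one of the fifteen vectors the paper certifies as irreducible. So the ray you exhibit corresponds to a toric embedded-essential divisor and is no obstruction to minimality. Note that \emph{any} non-primitive lattice vector is trivially the sum of two nonzero lattice points on its own ray, so reducibility of a non-primitive vector can never prove inessentiality; the same caveat applies to $(4,6,8)=2\,(2,3,4)$. For the same reason your closing remark is off: the component with weight vector $(6,10,14)$ does have a counterpart among the embedded-essential divisors (the divisor of the ray of $(3,5,7)$); what fails there is injectivity of the correspondence, not essentiality.

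The proposition is still true and your argument is repaired by replacing $(6,10,14)$ with one of the \emph{primitive} reducible weight vectors. For instance $(5,8,11)=(4,7,10)+(1,1,1)$: all three vectors lie in $\sigma_C$, the coordinates of $(5,8,11)$ are coprime, and $(5,8,11)$ lies in the interior of $\sigma_C$, so it belongs to no other maximal cone and hence to no $G_\sigma$ whatsoever; its ray is a $1$-dimensional cone of the weight-vector subdivision and is not a ray of $\Gamma(f)$, contradicting minimality. Equally good choices are $(5,7,11)=(4,6,9)+(1,1,2)$ (all summands in $\sigma_B$) and $(5,9,13)=(3,5,8)+(2,4,5)$ (all summands in $\sigma_A$). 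These are exactly among the decompositions the paper itself lists. The paper's proof also differs from yours in its first step: rather than checking regularity of the full $21$-vector subdivision, it verifies that the fifteen irreducible weight vectors already give a regular subdivision (hence a minimal embedded resolution built from weight vectors) and then lists decompositions of all the remaining weight vectors. Incidentally, the paper's decomposition $(5,9,14)=(3,4,6)+(3,5,8)$ contains a typo — the right-hand side sums to $(6,9,14)$; a correct decomposition inside $\sigma_A$ is $(5,9,14)=(2,4,5)+(3,5,9)$.
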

\begin{proof}
Recall that the set of weight vectors are in this case : $$(1,1,1),(1,1,2),(1,2,2),(1,2,3),(2,2,3),(2,3,4),(2,3,5),(2,4,5),(3,4,6),$$
 $$(3,5,7),(3,5,8),4,6,8),(4,6,9),(4,7,10),(5,7,11),(5,8,11)(5,8,12), (5,9,13)$$ $$(5,9,14),(6,10,14), (6,10,15)=O  $$

\begin{figure}[h]
\setlength{\unitlength}{0.50cm}
\begin{center}
\includegraphics[scale=0.6]{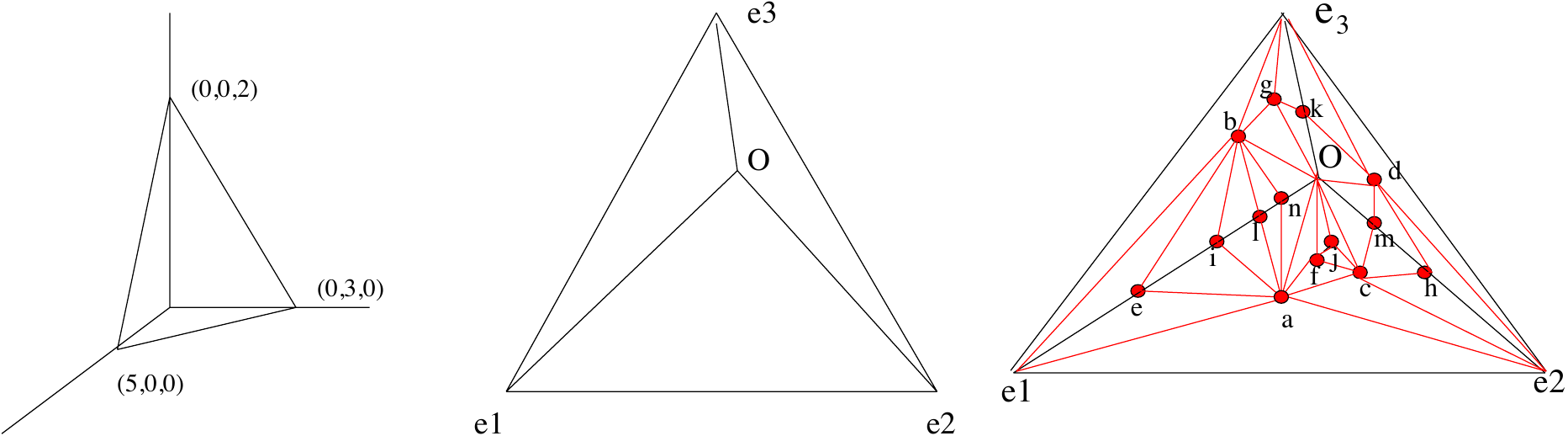}
\end{center}
\caption{ Newton dual polyhedron of $E_8$ and its resolution}
\end{figure}
One can show that we get a regular subdivision of each cone thanks to the following characteristic vectors $$(1,1,1),(1,1,2),(1,2,2),(1,2,3),(2,2,3),$$ $$(2,3,4),(2,3,5),(2,4,5),(3,4,6),(3,5,7),(3,5,8),(4,6,9),(4,7,10),(5,8,12), (6,10,15)=O$$ (We call them in the lexicographical order on fig.$8$)\\
 Moreover they are irreducible (one can show it {in the same way} as above), so they give subdivision which induces a minimal embedded toric resolution for $E_8$.\\
 In fact we have \begin {itemize}
 \item $(4,6,8)=(2,3,4)+(2,3,4)$
 \item $(5,7,11)=(4,6,9)+(1,1,2)$
 \item $(5,8,11)=(4,7,10)+(1,1,1)$
 \item $(5,9,13)=(3,5,8)+(2,4,5)$
 \item $(6,9,13)=(3,4,6)+(3,5,7)$
 \item $(5,9,14)=(3,4,6)+(3,5,8)$
 \item $(6,10,14)=(3,5,7)+(3,5,7)$
 \end{itemize}

\end{proof}

\subsection{A remark on abstract resolution of singularities of simple singularities}

For a simple singularity, the restriction of a minimal embedded resolution to the strict transform
of the singularitiy gives its minimal abstract resolution; note that not 
all the divisors that appear on the minimal embedded resolutions meet the strict transform, only those which
corresponds to weight vectors which belong to the two dimensional cones of the Newton dual fan. Moreover, these last
divisors may meet the strict tansform along more than one irreducible component.

%%DESSIN%%%%

%�*[@]{\hbox to 0pt{\hss\txt{\tilde{\beta}}\hss}}]

Hussein Mourtada,\\
Equipe G\'eom\'etrie et Dynamique, \\
Institut Math\'ematique de Jussieu-Paris Rive Gauche,\\
Universit\'e Paris 7, \\
B\^atiment Sophie Germain, case 7012,\\
75205 Paris Cedex 13, France.\\
Email: hussein.mourtada@imj-prg.fr\\

Camille Pl\'enat\\
I2M, Groupe AGT, Aix Marseille Universit\'e \\ 
CMI, Technop\^ole Ch\^ateau-Gombert\\
39, rue F. Joliot Curie, 13453 Marseille Cedex 13\\
Email: camille.plenat at univ-amu.fr 

\end{document}